\documentclass[pdflatex,sn-mathphys]{sn-jnl}

\usepackage{graphicx}%
\usepackage{multirow}%
\usepackage{amsmath,amssymb,amsfonts}%
\usepackage{amsthm}%
\usepackage{mathrsfs}%
\usepackage[title]{appendix}%
\usepackage{xcolor}%
\usepackage{textcomp}%
\usepackage{manyfoot}%
\usepackage{booktabs}%
\usepackage{algorithm}%
\usepackage{algorithmicx}%
\usepackage{algpseudocode}%
\usepackage{listings}%

\theoremstyle{thmstyleone}%
\newtheorem{theorem}{Theorem}
\newtheorem{proposition}[theorem]{Proposition}%

\newtheorem{lemma}{Lemma}

\theoremstyle{thmstyletwo}%
\newtheorem{remark}{Remark}%

\theoremstyle{thmstylethree}%
\newtheorem{definition}{Definition}%

\usepackage{placeins}

\usepackage{array}

\allowdisplaybreaks

\begin{document}

\title[On the moments of  Laguerre-Hahn linear forms of class zero]{On the moments of  Laguerre-Hahn linear forms of class zero}


\author[1]{\fnm{Mohamed} \sur{Khalfallah}}\email{mohamed.khalfallah@fsm.rnu.tn}

\author[2]{\fnm{Pascal} \sur{Maroni}} 
\equalcont{17th January 1933 -- 16th January  2024}

\author*[3]{\fnm{Zélia} \sur{da Rocha}}\email{mrdioh@fc.up.pt}

\affil[1]{\orgdiv{Department of Mathematics}, 
\orgname{Faculty of Sciences of Monastir, University of Monastir}, 
\orgaddress{
\city{Monastir}, 
\postcode{5019}, 
\country{Tunisia}}}

\affil[2]{\orgdiv{Laboratoire Jacques-Louis Lions}, 
\orgname{Sorbonne Université, CNRS}, 
\orgaddress{
\street{Boite courrier 187; 4, place Jussieu}, 
\city{Paris}, 
\postcode{75252 Paris cedex 05}, 
\country{France}}}

\affil[3]{\orgdiv{Departamento de Matemática, Centro de Matemática da Universidade do Porto (CMUP)}, 
\orgname{Faculdade de Ciências da Universidade do Porto}, 
\orgaddress{
\street{Rua do Campo Alegre n. 687}, 
\city{Porto}, 
\postcode{4169-007}, 
\country{Portugal}}}


\abstract{The purpose of this paper is to provide a constructive method for generating the moments of Laguerre-Hahn linear forms of class zero. The approach relies on a general algebraic system on the moments, equivalent to the Laguerre-Hahn functional equation for forms of arbitrary class, from which the restriction to class zero yields explicit recurrences allowing the entire moment sequence to be built recursively from the first moment. These recurrences are applied to each of the ten canonical class-zero families, and the corresponding first moments are derived explicitly via a {\it Mathematica$^{\circledR}$} implementation.}

\keywords{Orthogonal polynomials, Laguerre-Hahn forms, moment sequences, recurrence relations, symbolic computations, {\it Mathematica$^{\circledR}$}}


\pacs[MSC 2020 Classification]{34, 33C45, 33D45, 42C05, 33F10, 68W30, 62-09, 33F05, 65D20, 68-04}

\maketitle


\section{Introduction}
Orthogonal polynomials occupy a central place in methods of numerical analysis and approximation as well as in applications of mathematical physics, ranging from quadrature methods, rational approximation, spectral theory, random matrix theory, and quantum mechanics. Their study is intimately connected to the linear functionals, or forms, on the space of polynomials: to each regular form $u$, there exists a unique monic orthogonal polynomial sequence (MOPS) $\{P_n\}_{n \geq 0}$, characterized by a linear second-order recurrence relation
\begin{equation*}
  P_{n+2}(x) = (x - \beta_{n+1})\,P_{n+1}(x) - \gamma_{n+1}\,P_n(x), \quad n \geq 0,
\end{equation*}
with initial conditions $P_0(x):=1$ and $P_1(x):=x-\beta_0$, where $\{\beta_n\}_{n \geq 0}$ and $\{\gamma_{n+1}\}_{n \geq 0}$ are sequences of complex numbers with $\gamma_{n+1} \neq 0$ for all $n \geq 0$.\\ 

Among the various sets of regular forms, the Laguerre-Hahn forms constitute a broad and natural generalization of the classical forms --- Hermite, Laguerre, Bessel, and Jacobi --- as well as of the semiclassical forms; see~\cite{Maroni-1991, Maroni-1994}. \\

Laguerre--Hahn orthogonal polynomials have been studied extensively in the literature; see \cite{Alaya-these-1996, Alaya-Maroni,Dini-these-1988,Dzoumba-these-1985,Magnus-1983, Maroni-1983, Maroni-1991} and the references therein. The main contributions are due to A.~P.~Magnus, P.~Maroni, and A.~Ronveaux, who analyzed the differential properties of these polynomials, characterized the Laguerre--Hahn character, and explored computational aspects of differential equations. A recent paper \cite{Rebocho-2026} provides a historical synthesis of their work and discusses further connections within the framework of integrable systems. Despite more recent advances, the foundations of the theory remain largely due to these authors.\\

A regular form $u$ is said to be a Laguerre-Hahn form if its formal Stieltjes function
\begin{equation*}
  S(u)(z) := -\sum_{n \geq 0} \frac{(u)_n}{z^{n+1}},
\end{equation*}
where $(u)_n=\langle u, x^n \rangle$ are its moments,
satisfies a Riccati differential equation
\begin{equation*}
  \Phi(z)\,S'(u)(z) = B(z)\,S^2(u)(z) + C(z)\,S(u)(z) + D(z),
\end{equation*}
where $\Phi$, $B$, $C$, and $D$ are polynomials. Equivalently, $u$ satisfies the
distributional functional equation
\begin{equation}\label{eq:functional}
  (\Phi\,u)' + \psi\,u + B\,(x^{-1}u^2) = 0,
\end{equation}
where $C = -\Phi' - \psi$ and $D$ is determined by $\Phi$, $\psi$, $B$, and $u$. The class of a Laguerre-Hahn form $u$ is the non-negative integer
\begin{equation*}
  s := \min\,\max\!\bigl(\deg\psi - 1,\;\max\{\deg\Phi,\,\deg B\} - 2\bigr),
\end{equation*}
where the minimum is taken over all admissible triples $(\Phi,\psi,B)$ for which
\eqref{eq:functional} holds. The class measures the minimal complexity of the defining
equation and induces a natural hierarchy within the Laguerre-Hahn family. \\

Forms of class zero --- which correspond to $\deg\Phi \leq 2$, $\deg\psi \leq 1$,
$\deg B \leq 2$ with $|c_2|+|a_1|+|b_2|>0$ --- represent the simplest non-semiclassical
set, and encompass ten canonical families organized as analogues of the Hermite,
Laguerre, Bessel, and Jacobi classical cases.
With the aim of describing this class of Laguerre--Hahn forms, a complete description was recently established in~\cite{Article-2-Soummi}, updating the one found in \cite{Bouakkaz-these, Bouakkaz-Maroni-1991}. More precisely, up to an affine change of variable, there are ten canonical cases, including two new ones analogous to the classical Bessel families that did not appear previously in the literature \cite{Bouakkaz-these, Bouakkaz-Maroni-1991}. By applying the constructive method and algorithm developed in~\cite{Article-1-Soummi}, structure relations and a fourth-order linear differential equation satisfied by the corresponding orthogonal polynomial sequences were derived for some of these families, see ~\cite{Article-1-Soummi, Article-2-Soummi}.

In the same spirit, the present paper addresses the problem of determining the moment sequences of these families.\\

A fundamental characteristic element of any linear form $u$ is its moment sequence $(u)_n$, $n \geq 0$ as it determines $u$ uniquely --- via the formal Stieltjes function expansion above --- and encodes all relevant information about the form:  the regularity through its recurrence coefficients $\beta_n$ and $\gamma_{n+1}$, and the analytic type through the structure relations satisfied by the corresponding MOPS. \\

The moments of a linear functional play a central role in the theory of orthogonal polynomials and encode its entire algebraic structure. Introduced by Stieltjes and systematized by Shohat and Tamarkin~\cite{Shohat1943}, then further developed by Chihara~\cite{Chihara-1978}, they determine the coefficients of the linear
second-order recurrence relation through ratios of Hankel determinants, and the quasi-definiteness of the functional is characterized precisely by the non-vanishing of all such determinants~\cite{Chihara-1978,Gautschi2004}. 
In numerical analysis, Golub and Welsch~\cite{GolubWelsch1969} established that Gauss quadrature rules are obtained directly from the moments via the eigenstructure of the associated Jacobi matrix, while the modified Chebyshev method \cite{SackDonovan1971} (see also \cite{Rocha-1991}), provides a numerically stable procedure for computing the recurrence coefficients from modified moments. 
In rational approximation, the moments arise naturally in the construction of Pad\'e approximants, and Stieltjes continued fractions~\cite{Baker1996,Brezinski1980}, whose denominators coincide with the associated orthogonal polynomials. Finally, in the classical moment problem, the positivity of all Hankel matrices is necessary and sufficient for the existence of a representing measure, and Carleman's criterion guarantees its uniqueness~\cite{Shohat1943,Simon2011}.\\

Despite their importance, the explicit determination of the moments of Laguerre-Hahn forms is a non-trivial task. The functional equation~\eqref{eq:functional} is concise, but its direct exploitation for determining the moments is hampered by the nonlinear term $B(x^{-1}u^2)$, which gives rise to Cauchy convolution products
$\sum_{\mu=0}^{n-1}(u)_\mu(u)_{n-\mu-1}$ when applied on monomials. This nonlinearity renders the algebraic structure of the moment equations substantially more complex than in the semiclassical case, for which $B = 0$ and one obtains a purely linear recurrence.\\

The goal of the present paper is to derive explicit recurrence relations for the moments of Laguerre-Hahn forms of class zero, thereby providing a recursive method for generating the moments, one by one, from the first moment.\\

Our approach is the following.
We first show that, for a Laguerre-Hahn form $u$ of arbitrary
class satisfying the functional equation~\eqref{eq:functional} with
polynomials
$$\Phi(x) = \sum_{\nu=0}^{t} c_\nu x^\nu, \quad
\psi(x)  = \sum_{\nu=0}^{p} a_\nu x^\nu, \quad
B(x)     = \sum_{\nu=0}^{q} b_\nu x^\nu,$$
the functional equation~\eqref{eq:functional} is equivalent to the
infinite algebraic system on the moments
(see Lemma~\ref{L1})
\begin{equation}\label{eq:lemma}
  -n\sum_{\nu=0}^{t} c_\nu(u)_{n-1+\nu}
  \;+\; \sum_{\nu=0}^{p} a_\nu(u)_{n+\nu}
  \;+\; \sum_{\nu=0}^{q} b_\nu
        \sum_{\mu=0}^{n+\nu-1}(u)_{\mu}(u)_{n+\nu-\mu-1}
  \;=\; 0,
  \ n \geq 0.
\end{equation}
This transforms a problem of functional analysis into a purely algebraic system on the moments. However, equation \eqref{eq:lemma} couples even and odd indices and does not directly allow the recursive computation of moments from the initial ones.

The central result of this paper, Theorem~\ref{thm:main}, goes further by restricting to class zero. By decomposing~\eqref{eq:lemma} according to the parity of $n$, the
general system reorganizes into the explicit recurrence
relations~\eqref{mom-k}-\eqref{eq:decouple-j}, in which the highest-indexed moment appears linearly: then each moment $(u)_n$ is thus entirely determined by the preceding ones, and the whole sequence is generated recursively from the first moment $(u)_0$, which, in the normalized case is
$(u)_0 = 1$.

This main result is then applied systematically to all ten canonical families ca\-talogued in the literature~\cite{Article-2-Soummi}: two analogues of the Hermite case, two analogues of the Laguerre case, four analogues of the Bessel case, and two analogues of the Jacobi case. For each family, already normalized, i.e., satisfying $(u)_0=1$, the recurrence relations generate all moments solely from the coefficients of the polynomials $\Phi(x)$, $\psi(x)$, and $B(x)$. The first moments of each family are listed explicitly; their expressions grow rapidly in algebraic complexity, making a symbolic implementation of the recurrences not only convenient but indispensable for all ten families.\\

The remainder of the paper is structured as follows. Section~\ref{Section2} introduces the necessary notation and background results on linear forms, orthogonal polynomial sequences, and Laguerre-Hahn forms.  Section~\ref{Section3} states and proves the main results of this paper. First, Lemma~\ref{L1} establishes the equivalence between the functional equation and the algebraic system~\eqref{eq:lemma} for a form of arbitrary class, which serves as the algebraic foundation of our approach. Then, Theorem~\ref{thm:main} derives, for class zero, explicit recurrence relations that allow all moments to be generated from the first moment $(u)_0$ and the coefficients of the polynomials $\Phi(x)$, $\psi(x)$, and $B(x)$ appearing in the functional equation. Section~\ref{Section4} applies the main result to each of the ten canonical families of Laguerre-Hahn forms of class zero, providing the specific recurrence relations in each case. Section~\ref{Section5} collects the explicit first-moment expressions for each family, as computed by a
{\it Mathematica$^{\circledR}$} implementation of the recurrences. The article concludes with some final remarks.

\section{Notation and basic background}\label{Section2} 

This section collects the basic definitions, notations, and results used throughout this paper.
\subsection{Basic tools}
Let $\mathcal{P}$ denote the vector space of polynomials with complex coefficients, and let $\mathcal{P'}$ be its algebraic dual space.  
The elements of $\mathcal{P'}$ will be referred to as \emph{forms} (or linear functionals).  
The pairing between $\mathcal{P}$ and $\mathcal{P'}$ is expressed through the duality brackets $\langle \cdot, \cdot \rangle$.  
For a form $u \in \mathcal{P'}$, the sequence of complex numbers $(u)_n,~ n \geq 0$, is called the \emph{moment sequence} of $u$ relative to the monomial basis $\{x^n\}_{n \geq 0}$.  
In particular, the $n$-th moment is given by 
$$
(u)_n := \langle u, x^n \rangle,
$$
so that $u$ is uniquely determined by the sequence of its moments. 

Given $u \in \mathcal{P}'$, one can introduce the formal series $F(u)(z):= \sum_{n \geq 0}(u)_n z^n$ or, equivalently, the so-called formal Stieltjes function 
\begin{equation*}
S(u)(z):=-z^{-1} F(u)\left(z^{-1}\right)=-\sum_{n \geq 0} \frac{(u)_n}{z^{n+1}}, \tag{2.1}
\end{equation*}
which provides an alternative representation of the moment sequence $\{(u)_n\}_{n \geq 0}$ \cite{Maroni-1991}.  
Since the moments determine $u$ uniquely, the function $S(u)(z)$ does so as well.\\

In the following, we shall refer to a sequence $\{P_n\}_{n \geq 0}$ as a \emph{polynomial sequence} (PS) if $\deg P_n = n$ for all $n \geq 0$. A \emph{monic polynomial sequence} (MPS) is a PS in which each polynomial has a leading coefficient equal to one.  
If ${\{P_{n}\}}_{n \geq 0}$ is a MPS, there exists a unique sequence
$\{u_n\}_{n\geq 0}$, $u_n\in\mathcal{P}^{\prime}$, called the dual sequence of $\{P_{n}\}_{n\geq 0}$, such that,
\begin{equation}\label{SucDual}
\langle u_{n},P_{m}\rangle=\delta_{n,m}, \quad n,m\geq 0.
\end{equation}
We say that a sequence of forms $\{v_n \}_{ n \geq 0}$ is normalised if and only if $\left( v_n  \right)_n = 1, $ $n \geq 0$, and if $n \geq 1$, then $\left( v_n  \right)_m = 0, $ $m=0, \ldots, n-1$. Thus, the dual sequence  $\{u_n\}_{n\geq 0}$ is normalised. The first form $u_0$ is called the canonical form of ${\{P_{n}\}}_{n \geq 0}$.

We now introduce some operations on $\mathcal{P'}$ following \cite{Maroni-1991}.  
For $c \in \mathbb{C}$, $f,p \in \mathcal{P}$, and $u \in \mathcal{P'}$, we define:
\begin{eqnarray}
&&\langle fu, p \rangle = \langle u, fp \rangle, \quad
\langle u', p \rangle = -\langle u, p' \rangle, \label{op_2} \\
&&\langle (x-c)^{-1}u, p \rangle = \langle u, \theta_c p \rangle
= \left\langle u, \frac{p(x)-p(c)}{x-c} \right\rangle. \label{op_3} 
\end{eqnarray}
Given $f \in \mathcal{P}$ and $u \in \mathcal{P'}$, the product $uf$ is defined by  $(u f)(x) := \left\langle u, \displaystyle\frac{x f(x)-\zeta f(\zeta)}{x-\zeta} \right\rangle $.

This definition allows us to introduce the \emph{Cauchy product} of two forms $u,v \in \mathcal{P'}$ by 
\begin{equation}\label{op4}
\langle uv, f \rangle := \langle u, v f \rangle, \quad f \in \mathcal{P}.
\end{equation}

A linear functional $u$ is called \emph{regular} (or \emph{quasi-definite}) if there exists a sequence of polynomials  $\{P_n\}_{n \geq 0}$ such that \cite{Chihara-1978}
\begin{equation}\label{regular}
\langle u, P_n P_m \rangle = r_n \, \delta_{n,m}, \quad n, m \geq 0,
\end{equation}
where $\{r_n\}_{n \geq 0}$ is a sequence of nonzero complex numbers and $\delta_{n,m}$ denotes the Kronecker symbol.
The sequence $\{P_n\}_{n\geq 0}$ is then said orthogonal with respect to $u$. 
Then, necessarily, $\{P_n\}_{n \geq 0}$ is a PS, $u=(u)_0u_0$, and $\{P_n\}_{n \geq 0}$ and $u$ can be normalized. In the sequel, we shall consider that each $P_n(x)$ is monic, and $(u)_0=1$ (i.e. $u=u_0$).
Henceforth, a monic orthogonal polynomial sequence $\{P_n\}_{n\geq 0}$ will be indicated as MOPS. 

It is well known that a MOPS is characterized by the following second-order linear recurrence relation and initial conditions, respectively \cite{Chihara-1978}
\begin{eqnarray}
&& P_0(x)=1,\quad  P_1(x)=x-\beta_0, \label{ic_TTRR}\\
&& P_{n+2}(x)=(x-\beta_{n+1})P_{n+1}(x)-\gamma_{n+1}P_{n}(x),~~n\geq 0, \label{TTRR}
\end{eqnarray}
being $\{\beta_n\}_{n\geq 0}$ and $\{\gamma_{n+1}\}_{n\geq 0}$ sequences of complex numbers such that $\gamma_{n+1}\neq0$ for all $ n\geq 0$, that we designate by recurrence coefficients.

A linear functional $u$ is regular if and only if $H_n(u) \neq 0$ for all $n \geq 0$, where $H_n(u)$ is the Hankel determinant  defined by \cite{Chihara-1978, Maroni-1991}
\[
H_n(u) = \det \left( (u)_{i+j} \right)_{i, j = 0}^n =
\begin{vmatrix}
(u)_0 & (u)_1 & \cdots & (u)_n \\
(u)_1 & (u)_2 & \cdots & (u)_{n+1} \\
\vdots & \vdots & \ddots & \vdots \\
(u)_n & (u)_{n+1} & \cdots & (u)_{2n}
\end{vmatrix}, \ n\geq 0.
\]
In this case, the corresponding MOPS $\{P_n\}_{n \geq 0}$ is given by $P_0(x)=1$, and
\[
P_n(x) = \frac{(-1)^n}{H_{n-1}(u)}
\begin{vmatrix}
1 & x & \cdots & x^n \\
(u)_0 & (u)_1 & \cdots & (u)_n \\
(u)_1 & (u)_2 & \cdots & (u)_{n+1} \\
\vdots & \vdots & \ddots & \vdots \\
(u)_{n-1} & (u)_n & \cdots & (u)_{2n-1}
\end{vmatrix}, \ n\geq 1,
\]
which show that $\{P_n(x)\}_{n\geq 0}$ can be obtained from $\{(u)_n\}_{n\geq 0}$.

Let  $\{P_n^{(1)}\}_{n\geq0}$ be the associated polynomial sequence of order one of the MPS  $\{ P_n\}_{n\geq0}$ with respect to the canonical form $u=u_0$. It is well known that \cite{Chihara-1978}
$$
 P_n^{(1)}(x):=(u\theta_0P_{n+1})(x)=\bigg\langle  u,\frac{P_{n+1}(x)- P_{n+1}(\xi)}{x-\xi}  \bigg\rangle.
$$
The Stieltjes function of $u^{(1)}$ is expressed in terms of that of $u$ as  \cite{Maroni-1991} 
$$
\gamma_1 S\left(u^{(1)}\right)(z)=-\frac{1}{S(u)(z)}-\left(z-\beta_0\right).
$$

\subsection{Laguerre-Hahn forms}

\begin{definition} \cite{Dzoumba-these-1985,Magnus-1983,Maroni-1983}
A regular form $u$, with $(u)_0=1$, is said to be a Laguerre-Hahn form if its formal Stieltjes function satisfies the Riccati equation
\begin{equation}\label{Riccati}
A(z)S'(u)(z)=B(z)S^2(u)(z)+C(z)S(u)(z)+D(z),
\end{equation}
where $A$, $B$, $C$, and $D$ are polynomials.\\
The sequence \(\{P_n\}_{n\geq 0}\) orthogonal with respect to \( u \) is also called a Laguerre-Hahn sequence. 
\end{definition}

\begin{remark} \cite{Maroni-1991}
If $A=0$ identically, the form $u$ is classified as a second-degree form \cite{Maroni-1995}. 
If $A$ is not identically zero, it may be assumed, without loss of generality, that it is monic; and we let $A:=\Phi$. Under this normalization, the condition $B \neq 0$ characterizes $u$ as a strict Laguerre-Hahn form, whereas the case $B = 0$ corresponds to a semiclassical form.
\end{remark}

There are several characterizations of Laguerre-Hahn forms. Some of them are listed in the following result.
\begin{proposition} \cite{Alaya-Maroni,Dini-these-1988,Bouakkaz-Maroni-1991,Maroni-1991} 
Let $u$ be a regular and normalized form, i.e.,
 $(u)_0=1$, and let $\{P_n\}_{n\geq 0}$ be its corresponding MOPS. The following statements are equivalent
\begin{enumerate}
\item[(i)] $u$ is a Laguerre-Hahn form satisfying \eqref{Riccati} with $A=\Phi$.
\item[(ii)]  \cite{Dini-these-1988} $u$ satisfies the functional equation
\begin{equation}\label{Laguerre-Hahn-EF}
(\Phi u)'+\psi u+B(x^{-1}u^2)=0,
\end{equation}
where $\Phi$, $B$, $C$, and $D$ are the polynomials in \eqref{Riccati} and
\begin{align*}
&C=-\Phi'-\psi,\\
&D=-(u\theta_0\Phi)'-(u\theta_0\psi)-(u^2\theta_0^2B).
\end{align*}
\item[(iii)] \cite{Dini-these-1988} Each polynomial \(P_n, n \geq 0\), verifies the so-called structure relation
\begin{equation}\label{Dini_St_Rel}
\Phi(x)P_{n+1}^{\prime}(x) - B(x)P_n^{(1)}(x) = \sum_{\mu=n-s}^{n+d} \theta_{n,\mu} P_\mu(x), \quad n \geq s + 1,
\end{equation}
where \(\Phi\) and \(B\) are the polynomials defined in (i),  \(\{P_n^{(1)}\}_{n \geq 0}\) is the sequence of associated orthogonal polynomials of order 1 of \(\{P_n\}_{n \geq 0}\),  
$d=\max(t,q)$, $s=\max(p-1,d-2)$, being $t$, $p$, and $q$ the degrees of  $\Phi$, $\psi$, and $B$, respectively. 
\end{enumerate}
\end{proposition}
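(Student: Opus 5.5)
The plan is to prove (i)$\Leftrightarrow$(ii) by applying the Stieltjes transform $S$ to the functional equation, and to relate (ii) and (iii) by expanding the structure relation in the orthogonal basis $\{P_\mu\}$ and testing the functional equation against products $P_{n+1}P_\mu$.

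\emph{Step 1: (i)$\Leftrightarrow$(ii).} I would use the standard rules of the formal Stieltjes calculus from \cite{Maroni-1991}:
\[
S(u')=S(u)',\qquad S(fu)=f\,S(u)+(u\theta_0 f),\qquad S(x^{-1}u^2)=S(u)^2+\text{a polynomial term}.
\]
The last rule comes from $S(uv)=-zS(u)S(v)$ combined with $S(x^{-1}w)=z^{-1}\bigl(S(w)+(w)_0\bigr)$. I would apply $S$ to $(\Phi u)'+\psi u+B(x^{-1}u^2)$ and collect terms. This gives
\[
\Phi S'(u)-BS^2(u)+(\Phi'+\psi)S(u)
\]
plus a polynomial. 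That polynomial is exactly $-D$ with $D=-(u\theta_0\Phi)'-(u\theta_0\psi)-(u^2\theta_0^2B)$, up to checking the conventions for $\theta_0^2$. The Riccati equation \eqref{Riccati} with $C=-\Phi'-\psi$ therefore holds if and only if $S$ of the left-hand side of \eqref{Laguerre-Hahn-EF} vanishes. Since $S$ is injective on $\mathcal P'$, this is equivalent to \eqref{Laguerre-Hahn-EF}. Conversely, given $C$, I would set $\psi:=-\Phi'-C$ and run the same computation backwards.

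\emph{Step 2: (ii)$\Rightarrow$(iii).} The polynomial $\Phi P'_{n+1}-BP^{(1)}_n$ has degree at most $n+d$. Its coefficient on $P_\mu$ is
\[
\theta_{n,\mu}=\frac{\langle u,(\Phi P'_{n+1}-BP^{(1)}_n)P_\mu\rangle}{r_\mu}.
\]
I would evaluate \eqref{Laguerre-Hahn-EF} at $P_{n+1}P_\mu$ and treat the three terms separately.
\begin{itemize}
\item The derivative term gives $-\langle u,\Phi P'_{n+1}P_\mu\rangle-\langle u,\Phi P_{n+1}P'_\mu\rangle$. The second part vanishes by orthogonality when $t+\mu-1<n+1$.
\item The $\psi$ term $\langle u,\psi P_{n+1}P_\mu\rangle$ vanishes when $p+\mu<n+1$.
\item For the nonlinear term I use $\langle x^{-1}u^2,f\rangle=\langle u,u\theta_0 f\rangle$ and split the difference quotient:
\[
\frac{f(x)-f(\xi)}{x-\xi}=P_\mu(x)\frac{P_{n+1}(x)-P_{n+1}(\xi)}{x-\xi}+P_{n+1}(\xi)\frac{P_\mu(x)-P_\mu(\xi)}{x-\xi}.
\]
Applying $u_\xi$ turns the first piece into $P_\mu(x)P^{(1)}_n(x)$. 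The second piece gives zero by orthogonality, since its degree in $\xi$ is below $n+1$.
\end{itemize}
So $\langle B(x^{-1}u^2),P_{n+1}P_\mu\rangle=\langle u,BP^{(1)}_nP_\mu\rangle$. For $\mu<n-s$, with $s=\max(p-1,d-2)$, all parasitic terms vanish and the equation reduces to $\langle u,(\Phi P'_{n+1}-BP^{(1)}_n)P_\mu\rangle=0$. Hence $\theta_{n,\mu}=0$ for $\mu<n-s$, which is \eqref{Dini_St_Rel}.

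\emph{Step 3: (iii)$\Rightarrow$(ii).} This is the main obstacle, because $\psi$ must be constructed. I would first consider the form $w:=(\Phi u)'+B(x^{-1}u^2)$. Reading the identities of Step 2 backwards, (iii) gives
\[
\langle w,P_{n+1}P_\mu\rangle=-\langle u,\Phi P_{n+1}P'_\mu\rangle \quad\text{for } \mu<n-s,
\]
and the right-hand side vanishes for the same degree reasons. Taking $\mu=0$, this yields $\langle w,P_m\rangle=0$ for all $m\ge s+2$. A form annihilating $P_m$ for all large $m$ has the form $w=-\psi u$ with $\psi=-\sum_{m\le s+1}\langle w,P_m\rangle r_m^{-1}P_m$, a polynomial of degree at most $s+1$. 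This gives \eqref{Laguerre-Hahn-EF}. The delicate points are the bookkeeping of the thresholds $n\ge s+1$, and checking that the finitely many low-index relations impose no extra constraints beyond this definition of $\psi$.
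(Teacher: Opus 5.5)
The paper gives no proof of this proposition; it quotes it from the cited literature. Your plan is the standard argument from those sources: Stieltjes calculus for (i)$\Leftrightarrow$(ii), and testing the functional equation on $P_{n+1}P_\mu$ for (ii)$\Leftrightarrow$(iii). The overall structure is sound, but four details need repair.

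\emph{Stieltjes rules.} With $\langle x^{-1}w,p\rangle=\langle w,\theta_0p\rangle$ one has $(x^{-1}w)_0=0$ and $(x^{-1}w)_n=(w)_{n-1}$. Hence $S(x^{-1}w)=z^{-1}S(w)$, not $z^{-1}\bigl(S(w)+(w)_0\bigr)$. Your rule would give $S(x^{-1}u^2)=-S^2(u)+(u)_0^2z^{-1}$, whose extra term is not a polynomial; the sign of $S^2(u)$ in your display is also off. In fact $S(x^{-1}u^2)=-S^2(u)$ exactly. The polynomial $(u^2\theta_0^2B)$ comes instead from $S(Bv)=B\,S(v)+(v\theta_0B)$ with $v=x^{-1}u^2$. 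This works because $(v\theta_0B)(z)$ and $(u^2\theta_0^2B)(z)$ both equal $u^2_\xi$ applied to the second divided difference of $B$ at $0,z,\xi$. Your final collected expression is nevertheless correct.

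\emph{The direction (i)$\Rightarrow$(ii).} In the Riccati equation $D$ is an arbitrary polynomial, so ``running the computation backwards'' needs one more observation. Put $\psi:=-\Phi'-C$ and let $D_0$ be the expression for $D$ in (ii). Then
\[
S\bigl((\Phi u)'+\psi u+B(x^{-1}u^2)\bigr)=\Phi S'(u)-B S^2(u)-C S(u)-D_0=D-D_0 .
\]
The left side lies in $z^{-1}\mathbb{C}[[z^{-1}]]$ and the right side is a polynomial, so both vanish. This gives the functional equation and the stated formula for $D$ at once.

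\emph{Nonlinear term in Step 2.} Since $\langle B(x^{-1}u^2),f\rangle=\langle x^{-1}u^2,Bf\rangle$, the splitting must be done on $(BP_\mu)P_{n+1}$, i.e.\ with $BP_\mu$ in place of $P_\mu$. Splitting $P_{n+1}P_\mu$ and inserting $B$ afterwards tacitly uses $u\theta_0(Bf)=B\,(u\theta_0 f)$, which is false in general. With the correct splitting, the second piece has $\xi$-degree $q+\mu-1<n+1$, because $q\le s+2$ and $\mu\le n-s-1$. So your identity $\langle B(x^{-1}u^2),P_{n+1}P_\mu\rangle=\langle u,BP^{(1)}_nP_\mu\rangle$ does hold.

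\emph{Step 3.} The ``delicate points'' you anticipate do not arise. Only $\mu=0$ is needed: $\langle w,P_{n+1}\rangle=-\langle u,\Phi P'_{n+1}-BP^{(1)}_n\rangle=0$ for $n\ge s+1$. This holds because the expansion in (iii) then starts at $P_{n-s}$ with $n-s\ge 1$. The representation $w=\phi u$ with $\phi=\sum_{m\le s+1}\langle w,P_m\rangle r_m^{-1}P_m$ is an identity of forms (check it on every $P_k$), so no low-index compatibility conditions remain. Here $s$ should be read as a given integer with $s\ge d-2$, which is all your degree counts use; the resulting $\psi$ then has $\deg\psi\le s+1$.
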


It is worth noting that the above functional equation \eqref{Laguerre-Hahn-EF} is not uniquely determined.
Indeed, if $u$ is a Laguerre-Hahn form and $\chi$ is an arbitrary polynomial, then $u$ also satisfies
\[
(\chi \Phi u)' + \bigl(\chi \psi - \chi' \Phi\bigr) u
+ (\chi B)\,(x^{-1}u^{2}) = 0.
\]
This observation motivates the following definition.
\begin{definition} \cite{Alaya-Maroni, Bouakkaz-Maroni-1991}
The class of a Laguerre-Hahn form $u$ is the non-negative integer number defined as
$$
s:=\min\max\big\{\deg{\psi}-1, \max\{\deg{\Phi}, \deg{B}\}-2\big\},
$$
where the minimum is taken among all polynomials $\Phi, \psi$ and $B$ such that $u$
satisfies \eqref{Laguerre-Hahn-EF}.
\end{definition}

Taking into account that the class of a Laguerre-Hahn form is crucial to state a hierarchy of such families, we need to give a criterion to characterize it.
\begin{proposition}\label{proposition-simplification} \cite{Alaya-Maroni,Bouakkaz-Maroni-1991}
Let $u$ be a Laguerre-Hahn form and let $\Phi$ and $\psi$ be non-zero polynomials 
such that \eqref{Laguerre-Hahn-EF} holds.
 Let
 \begin{equation}\label{s_class_LH}
 s=\max\big\{\deg{\psi}-1, \max\{\deg{\Phi}, \deg{B}\}-2\big\}.
 \end{equation} 
 Then $s$ is the class of $u$ if and only if
\begin{equation*}
\prod_{c\in\mathcal{Z}_{\Phi}}{\Big(|\Phi'(c)+\psi(c)|+|B(c)|+|\langle u, \theta_c^2\Phi+\theta_c\psi+u\theta_0\theta_c B\rangle|\Big)}\neq0,
\end{equation*}
where $\mathcal{Z}_{\Phi}$ denotes the set of zeros of $\Phi$.
\end{proposition}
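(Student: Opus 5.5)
The plan is to reduce the statement to a local question at each zero $c$ of $\Phi$: can the equation \eqref{Laguerre-Hahn-EF} be divided by the factor $x-c$? I would then show that this division is possible exactly when the three quantities in the product all vanish.

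\textbf{Step 1 (local decomposition).} Fix $c\in\mathcal{Z}_\Phi$ and write
\[
\Phi=(x-c)\Phi_c,\qquad \psi=(x-c)\psi_c+\psi(c),\qquad B=(x-c)B_c+B(c),
\]
so that $\Phi_c(c)=\Phi'(c)$ and $\psi_c=\theta_c\psi$, $B_c=\theta_cB$. Using $((x-c)\Phi_c u)'=(x-c)(\Phi_c u)'+\Phi_c u$ and $\Phi_c=(x-c)\theta_c\Phi_c+\Phi'(c)$, the functional equation becomes
\[
(x-c)\,T_c+\bigl(\Phi'(c)+\psi(c)\bigr)u+B(c)\,(x^{-1}u^2)=0,
\]
\[
T_c:=(\Phi_c u)'+(\psi_c+\theta_c\Phi_c)\,u+B_c\,(x^{-1}u^2).
\]
Here $\theta_c\Phi_c=\theta_c^2\Phi$.

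\textbf{Step 2 (sufficiency of vanishing for reduction).} Suppose $\Phi'(c)+\psi(c)=0$ and $B(c)=0$. Then $(x-c)T_c=0$, which forces $T_c=\langle T_c,1\rangle\,\delta_c$. I will compute $\langle T_c,1\rangle$ directly from \eqref{op_2}, \eqref{op_3} and \eqref{op4}: the derivative term contributes nothing, the second term gives $\langle u,\theta_c^2\Phi+\theta_c\psi\rangle$, and the last gives $\langle u,u\theta_0\theta_cB\rangle$. So $\langle T_c,1\rangle$ is precisely the third quantity $\langle u,\theta_c^2\Phi+\theta_c\psi+u\theta_0\theta_cB\rangle$. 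Hence, when all three quantities vanish, $T_c=0$. This means $u$ satisfies \eqref{Laguerre-Hahn-EF} with the triple $(\Phi_c,\ \theta_c\psi+\theta_c^2\Phi,\ \theta_cB)$, whose degrees are each one less, so $s$ decreases by one. This proves that if the product vanishes, then $s$ is not the class.

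\textbf{Step 3 (converse).} Assume $s$ is not the class, so $u$ also satisfies an equation with a triple $(\Phi_0,\psi_0,B_0)$ of smaller class. I want to show that $(\Phi,\psi,B)=(\chi\Phi_0,\ \chi\psi_0-\chi'\Phi_0,\ \chi B_0)$ for some polynomial $\chi$ with $\deg\chi\ge1$. Granting this, any zero $c$ of $\chi$ is a zero of $\Phi$, and I would read Step 1 backwards at that $c$:
\begin{itemize}
\item $\Phi'(c)+\psi(c)=\chi(c)(\Phi_0'(c)+\psi_0(c))=0$;
\item $B(c)=\chi(c)B_0(c)=0$;
\item $T_c=(\theta_c\chi)\cdot(\text{the }(\Phi_0,\psi_0,B_0)\text{ equation})=0$, so the third quantity also vanishes.
\end{itemize}
Hence the product is zero.

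\textbf{Main obstacle: the proportionality claim in Step 3.} I would argue through the Riccati form \eqref{Riccati}. Multiply the two Riccati equations by $\Phi_0$ and $\Phi$ respectively and subtract, which eliminates $S'$. This leaves an algebraic relation $\tilde B S^2+\tilde C S+\tilde D=0$.
\begin{itemize}
\item If $\tilde B$, $\tilde C$, $\tilde D$ all vanish, the triples are proportional. A gcd argument on $\Phi\Phi_0^{-1}$ together with $C=-\Phi'-\psi$ then produces $\chi$ in the stated form, and the degree comparison forces $\deg\chi\ge1$.
\item If they do not all vanish, $u$ is of second degree. This case must be handled separately: for instance, show directly that a second-degree form's minimal Riccati equation divides every other one, by differentiating the quadratic relation and using that the relation of minimal degree in $S$ is unique up to a polynomial factor.
\end{itemize}
This second-degree situation is where I expect the real difficulty to lie.
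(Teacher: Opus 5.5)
\textbf{Gap.} Your converse (Step 3) fails in the second-degree case you flagged, and that case cannot be repaired. The divisibility you hope for is false there, and so is the proposition as stated. If $u$ is of second degree, there are polynomials $\psi_1$ and $B_1\neq0$ with $\psi_1u+B_1(x^{-1}u^2)=0$. Adding $\lambda\bigl(\psi_1u+B_1(x^{-1}u^2)\bigr)$, for an arbitrary $\lambda\in\mathcal P$, to any Laguerre--Hahn equation gives another one. So the admissible triples are not all of the form $(\chi\Phi_0,\chi\psi_0-\chi'\Phi_0,\chi B_0)$.

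Concretely, let $u$ be the Chebyshev form of the first kind, $(u)_{2k}=\binom{2k}{k}4^{-k}$, $(u)_{2k+1}=0$. Then $F(u^2)=F(u)^2=(1-z^2)^{-1}$, so $(x^{-1}u^2)_n$ equals $1$ for odd $n$ and $0$ for even $n$, and hence $(x^2-1)(x^{-1}u^2)=0$. Combined with $((x^2-1)u)'-xu=0$, this shows that $u$ satisfies \eqref{Laguerre-Hahn-EF} with $\Phi=x^2-1$, $\psi=-x$, $B=x^m(x^2-1)$ for every $m\geq0$. This triple gives $s=m$, while the triple $(x^2-1,-x,0)$ shows the class is $0$. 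Yet at $c=\pm1$ one has $\Phi'(c)+\psi(c)=c\neq0$, so the product never vanishes. For $m\geq1$ this is a direct failure of the implication ``$s$ is not the class $\Rightarrow$ product $=0$'' that Step 3 sets out to prove.

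\textbf{What survives.} Your Steps 1 and 2 are correct, including the identification $\langle T_c,1\rangle=\langle u,\theta_c^2\Phi+\theta_c\psi+u\theta_0\theta_cB\rangle$. Step 3 is also correct under the missing hypothesis that $u$ is not of second degree, i.e.\ $pu+q(x^{-1}u^2)=0$ with $p,q\in\mathcal P$ forces $p=q=0$. In that case $\tilde B=\tilde C=\tilde D=0$, and your gcd step can be made precise as follows. Let $(\Phi_0,\psi_0,B_0)$ be a triple realizing the class $s_0$, and write $\Phi/\Phi_0=P/Q$ in lowest terms. Then $Q$ divides $\Phi_0$, $B_0$, $C_0$ and $D_0$. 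Dividing that Riccati equation by $Q$ yields a triple of value at most $s_0-\deg Q$, so $Q$ is constant. Hence $\chi=\Phi/\Phi_0$ is a polynomial, $\psi=\chi\psi_0-\chi'\Phi_0$ follows from $C=-\Phi'-\psi$, and your backward reading of Step 1 at a zero of $\chi$ finishes the proof. Since every second-degree form is semiclassical, this restriction excludes no form that is Laguerre--Hahn without being semiclassical.
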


Let $a \in \mathbb{C}\setminus\{0\}$ and $b \in \mathbb{C}$. 
If a Laguerre-Hahn form $u$ of class $s$ satisfies \eqref{Laguerre-Hahn-EF}, 
then the shifted form $\tilde{u} = (h_{a^{-1}} \circ \tau_{-b})u$ 
is also a Laguerre-Hahn form of class $s$ and satisfies \cite{Bouakkaz-these,Bouakkaz-Maroni-1991,Dini-these-1988}
\begin{equation*}
(\tilde{\Phi}\,\tilde{u})' + \tilde{\psi}\,\tilde{u}
+ \tilde{B}\bigl(x^{-1}\tilde{u}^2\bigr) = 0,
\end{equation*}
where 
$$
\tilde{\Phi}(x) = a^{-\deg\Phi}\,\Phi(ax+b),\ 
\tilde{\psi}(x) = a^{1-\deg\Phi}\,\psi(ax+b),\ 
\tilde{B}(x) = a^{-\deg\Phi}B(ax+b).
$$
Hence, a displacement does not change either the Laguerre-Hahn character or the class of a Laguerre-Hahn linear functional. Therefore, we may consider canonical functional equations by re-situating the zeros of $\Phi$ in 
\eqref{Laguerre-Hahn-EF}.

\section{Recurrence relations for the moments}\label{Section3}

In this section, we provide a general recurrence relation for the moments of any Laguerre-Hahn form of class zero, thereby allowing the generation of all moments from the first one $(u)_0 = 1$.

Throughout this work, we use the conventions
\[
\sum_{a}^{b} = 0, \  \text{whenever } b < a,
\quad\text{and}\quad
(u)_k = 0, \  \text{for all } k < 0.
\]

First, we present a general lemma, with no assumption on the class, stating the equivalence between the functional equation \eqref{Laguerre-Hahn-EF} and an algebraic system satisfied by the moments. This equivalence constitutes a crucial
theoretical foundation. After that, we restrict to class zero and show how this system reorganises into explicit recurrences.

\begin{lemma}\label{L1}
Let $u$ be a regular linear form defined on the dual space ${\mathcal  P}'$. The following statements are equivalent:
\begin{enumerate}
\item[(a)] $u$ is a Laguerre-Hahn form satisfying the following  functional equation
\begin{equation}\label{(4.1.2.1)}
(\Phi u)' + \psi u + B(x^{-1}u^2) = 0,
\end{equation}
where
\begin{equation}\label{polcoeff_funEq}
\Phi(x) = \sum_{\nu=0}^{t} c_\nu x^\nu, 
\quad 
\psi(x) = \sum_{\nu=0}^{p} a_\nu x^\nu, 
\quad 
B(x) = \sum_{\nu=0}^{q} b_\nu x^\nu.
\end{equation}
\item[(b)] The moments of the form $u$ satisfy the following recurrence relation:
\begin{equation}
-n \sum_{\nu=0}^{t} c_\nu (u)_{n-1+\nu}
 + \sum_{\nu=0}^{p} a_\nu (u)_{n+\nu}
 + \sum_{\nu=0}^{q} b_\nu \sum_{\mu=0}^{\,n+\nu-1} (u)_\mu (u)_{n+\nu-\mu-1}
 = 0, \  n \geq 0.
\end{equation}
\end{enumerate}
\end{lemma}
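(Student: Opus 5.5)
Since a form in $\mathcal{P}'$ is determined by its moments, equation \eqref{(4.1.2.1)} holds if and only if the form $(\Phi u)' + \psi u + B(x^{-1}u^2)$ annihilates every monomial $x^n$, $n\geq 0$. We therefore evaluate each of the three terms on $x^n$, using only the definitions \eqref{op_2}, \eqref{op_3} and \eqref{op4}. The left-hand side of (b) will then turn out to be exactly $\langle (\Phi u)' + \psi u + B(x^{-1}u^2), x^n\rangle$. This gives both implications at once. Direction (a)$\Rightarrow$(b) is just evaluation on $x^n$. For (b)$\Rightarrow$(a), if the pairing vanishes on all monomials, then by linearity it vanishes on $\mathcal{P}$, so the form is zero. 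Regularity of $u$ plays no role in this equivalence; it only makes $u$ Laguerre-Hahn via Proposition (ii).

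For the linear terms we compute
\[
\langle (\Phi u)', x^n\rangle = -n\langle u, \Phi(x)x^{n-1}\rangle = -n\sum_{\nu=0}^{t}c_\nu (u)_{n-1+\nu}.
\]
For $n=0$ the factor $n$ kills this term, consistent with the convention $(u)_k=0$ for $k<0$. Similarly,
\[
\langle \psi u, x^n\rangle=\sum_{\nu=0}^{p} a_\nu (u)_{n+\nu}.
\]

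The main step is the nonlinear term. Write $\langle B(x^{-1}u^2),x^n\rangle=\sum_\nu b_\nu\langle x^{-1}u^2, x^{n+\nu}\rangle$, so it suffices to show that
\[
\langle x^{-1}u^2, x^m\rangle=\langle u^2,\theta_0 x^m\rangle = \sum_{\mu=0}^{m-1}(u)_\mu (u)_{m-1-\mu}.
\]
Since $\theta_0 x^m = x^{m-1}$ for $m\geq1$ and $\theta_0 1 = 0$, we need $\langle u^2, x^k\rangle$. By \eqref{op4}, $\langle u^2, x^k\rangle=\langle u, u x^k\rangle$, with
\[
(ux^k)(x)=\Big\langle u_\zeta, \frac{x\cdot x^k-\zeta\,\zeta^k}{x-\zeta}\Big\rangle = \Big\langle u_\zeta, \sum_{j=0}^{k} x^{j}\zeta^{k-j}\Big\rangle=\sum_{j=0}^k (u)_{k-j}x^j.
\]
Applying $u$ gives
\[
\langle u^2,x^k\rangle=\sum_{j=0}^k (u)_j(u)_{k-j}.
\]
Taking $k=m-1=n+\nu-1$ yields the Cauchy convolution in (b), which is empty when $n+\nu=0$.

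The only delicate point is this last computation: keeping track of which variable $u$ acts on in the definition of $uf$, and handling the edge cases $n=0$ or $n+\nu=0$ where $\theta_0$ annihilates constants. These are exactly covered by the stated conventions that empty sums vanish and that moments with negative index are zero.
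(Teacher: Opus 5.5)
Your proof is correct and takes the same route as the paper. The paper observes that the functional equation is equivalent to $\langle (\Phi u)' + \psi u + B(x^{-1}u^2), x^n\rangle = 0$ for all $n\geq 0$ and omits the computation; you supply exactly that computation, including $\langle u^2, x^k\rangle=\sum_{j=0}^{k}(u)_j(u)_{k-j}$ via the Cauchy product and the edge cases $n=0$ and $n+\nu=0$.
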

\begin{proof}
The proof of this lemma requires the operations \eqref{op_2}-\eqref{op_3} defined on ${\mathcal  P}'$; it is  straightforward from the fact that Equation \eqref{(4.1.2.1)} is equivalent to $\langle (\Phi u)' + \psi u + B(x^{-1} u^2), x^n \rangle = 0,~ n \geq 0$,
and is therefore omitted.
\end{proof}

In the sequel, we assume that $\left\{P_n\right\}_{n \geq 0}$ is a Laguerre-Hahn sequence of class zero satisfying \eqref{ic_TTRR}-\eqref{TTRR} and its corresponding form $u$ satisfying \eqref{Laguerre-Hahn-EF} with
\[ 
\Phi(x) = c_2 x^2 + c_1 x + c_0, \quad \psi(x) = a_1 x + a_0, \quad B(x) = b_2 x^2 + b_1 x + b_0,
\]
with 
\[
|c_2| + |a_1| + |b_2| > 0.
\]

Before stating the main result, we recall the following useful
identity on convolution sums, which plays a central role in its proof.
 
\begin{lemma}\label{lem:symsum}
Let $N\geq0$ be an integer and $(U_k)_{k\ge0}$ be a sequence. Then,
\[
\sum_{\mu=0}^{N}U_\mu U_{N-\mu}
=
\begin{cases}
\displaystyle 2\sum_{\mu=0}^{M-1}U_\mu U_{2M-\mu}+U_M^2, & \text{if } N=2M \text{ is even},\\[8pt]
\displaystyle 2\sum_{\mu=0}^{M}U_\mu U_{2M+1-\mu}, & \text{if } N=2M+1 \text{ is odd}.
\end{cases}
\]
\end{lemma}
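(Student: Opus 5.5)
The identity follows from the symmetry $\mu \mapsto N-\mu$ of the convolution sum $\Sigma_N:=\sum_{\mu=0}^{N}U_\mu U_{N-\mu}$. This map is an involution of the index set $\{0,1,\dots,N\}$ and leaves each summand unchanged, because $U_\mu U_{N-\mu}=U_{N-\mu}U_\mu$ (the $U_k$ commute, being complex numbers). So the plan is to split the index set into the pairs $\{\mu,N-\mu\}$ and the possible fixed point $\mu=N/2$. Then we collect the contributions.

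\emph{Odd case, $N=2M+1$.} The involution has no fixed point, since $\mu=N-\mu$ would force $2\mu=2M+1$. We write
\[
\Sigma_N=\sum_{\mu=0}^{M}U_\mu U_{2M+1-\mu}+\sum_{\mu=M+1}^{2M+1}U_\mu U_{2M+1-\mu}.
\]
In the second sum we substitute $\mu'=2M+1-\mu$, so that $\mu'$ runs over $0,\dots,M$. The second sum then becomes $\sum_{\mu'=0}^{M}U_{2M+1-\mu'}U_{\mu'}$, which equals the first sum. This gives $\Sigma_N=2\sum_{\mu=0}^{M}U_\mu U_{2M+1-\mu}$.

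\emph{Even case, $N=2M$.} Here we isolate the middle term $\mu=M$, which contributes $U_M^2$, and split the rest as
\[
\Sigma_N=\sum_{\mu=0}^{M-1}U_\mu U_{2M-\mu}+U_M^2+\sum_{\mu=M+1}^{2M}U_\mu U_{2M-\mu}.
\]
The substitution $\mu'=2M-\mu$ maps $\{M+1,\dots,2M\}$ bijectively onto $\{0,\dots,M-1\}$. So the last sum equals the first one, and we obtain $2\sum_{\mu=0}^{M-1}U_\mu U_{2M-\mu}+U_M^2$. For $M=0$ the paper's convention $\sum_a^b=0$ for $b<a$ makes the formula read $U_0^2$, which is correct.

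There is no real obstacle in this argument. The only points needing care are the index bookkeeping in the reindexing (checking that the ranges match exactly) and the degenerate case $M=0$. Both are handled above using the paper's summation conventions.
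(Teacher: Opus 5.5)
Your proof is correct and complete. You pair $\mu$ with $N-\mu$, isolate the fixed point $\mu=M$ when $N=2M$, and reindex the upper half of the sum; the range bookkeeping checks out, including the degenerate case $M=0$. The paper gives no proof of this lemma (it is only ``recalled'' as a known identity), so your argument supplies the routine verification the authors omitted.
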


We are now in a position to state the main result of the paper.

\begin{theorem}\label{thm:main}
Let $u$ be a regular linear form defined on the dual space ${\mathcal  P}'$. The following statements are equivalent:
\begin{enumerate}
\item[(a)] $u$ is a Laguerre–Hahn form of class zero satisfying \eqref{(4.1.2.1)}-\eqref{polcoeff_funEq}.
\vspace{0.15cm}
\item[(b)] The moments of the form $u$ satisfy the following recurrence relations \\
\begin{enumerate}
\item[$\bullet$] for $k \in \{1,2,3,4\}$,   
\begin{equation}\label{mom-k}
(u)_k = \frac{M_k}{E_{0,\,k-3}},    
\end{equation}
\item[$\bullet$] for $n \geq 1$ and $j \in \{0,1\}$, 
\begin{equation}\label{eq:decouple-j}
\begin{aligned}
& E_{n,j} (u)_{2n+3+j} + F_{n,j} (u)_{2n+2+j} + G_{n,j} (u)_{2n+1+j} \\
&\quad + b_2 \Bigl[ 2\sum_{\nu=3}^{n+1} (u)_\nu (u)_{2n+3+j-\nu} 
    + \delta_{j,1} (u)_{n+2}^2 \Bigr] \\
&\quad + b_1 \Bigl[ 2\sum_{\nu=2}^{n} (u)_\nu (u)_{2n+2+j-\nu} 
    + \delta_{j,0} (u)_{n+1}^2 
    + \delta_{j,1}\cdot 2\,(u)_{n+1}(u)_{n+2} \Bigr] \\
&\quad + b_0 \Bigl[ 2\sum_{\nu=1}^{n} (u)_\nu (u)_{2n+1+j-\nu} 
    + \delta_{j,1} (u)_{n+1}^2 \Bigr] = 0,\quad n\geq 1,\quad j\in\{0,1\},
\end{aligned}
\end{equation}
\end{enumerate}
where
\begin{align}
E_{n,j} &:= a_1 - (2n+2+j)c_2 + 2b_2 (u)_0 \neq 0, \\
F_{n,j} &:= a_0 - (2n+2+j)c_1 + 2b_1 (u)_0 + 2b_2 (u)_1, \\
G_{n,j} &:= -(2n+2+j)c_0 + 2b_0 (u)_0 + 2b_1 (u)_1 + 2b_2 (u)_2,
\end{align}
and
\begin{align}
M_1 :=& -\bigl[a_0 (u)_0 + b_1 (u)_0^2\bigr], \\
M_2 :=& c_0 (u)_0 - (a_0 - c_1)(u)_1 - b_0 (u)_0^2 - 2b_1 (u)_0 (u)_1 - b_2 (u)_1^2, \\
M_3 :=& 2c_0 (u)_1 + 2c_1 (u)_2 - a_0 (u)_2 - 2b_0 (u)_0 (u)_1 \nonumber\\
& - 2b_1 (u)_0 (u)_2 - b_1 (u)_1^2 - 2b_2 (u)_1 (u)_2, \\
M_4 :=& 3c_0 (u)_2 + 3c_1 (u)_3 - a_0 (u)_3 - 2b_0 (u)_0 (u)_2 - b_0 (u)_1^2 \nonumber\\
& - 2b_1 (u)_0 (u)_3 - 2b_1 (u)_1 (u)_2 - 2b_2 (u)_1 (u)_3 - b_2 (u)_2^2.
\end{align}
\end{enumerate}
\end{theorem}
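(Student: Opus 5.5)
The plan is to specialize Lemma~\ref{L1} to $t=q=2$, $p=1$, and then reorganize the resulting system index by index. With $\Phi=c_2x^2+c_1x+c_0$, $\psi=a_1x+a_0$ and $B=b_2x^2+b_1x+b_0$, statement (b) of Lemma~\ref{L1} reads, for every $N\ge 0$,
\[
-N\bigl(c_0(u)_{N-1}+c_1(u)_N+c_2(u)_{N+1}\bigr)+a_0(u)_N+a_1(u)_{N+1}
+\sum_{\nu=0}^{2}b_\nu\sum_{\mu=0}^{N+\nu-1}(u)_\mu(u)_{N+\nu-\mu-1}=0 .
\]
The highest-indexed moment is $(u)_{N+1}$. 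It appears linearly through $a_1-Nc_2$ and, inside the $b_2$ convolution $\sum_{\mu=0}^{N+1}(u)_\mu(u)_{N+1-\mu}$, through the two endpoint terms $\mu=0$ and $\mu=N+1$, which together contribute $2b_2(u)_0(u)_{N+1}$. Hence its total coefficient is $a_1-Nc_2+2b_2(u)_0$.

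In the same way, I would extract from the convolutions every term containing $(u)_N$ and $(u)_{N-1}$ (the endpoints $\mu\in\{0,1,2\}$ and their mirrors). This produces exactly the coefficients $F$ and $G$ once one sets $N=2n+2+j$, so that $N+1=2n+3+j$, $N=2n+2+j$ and $N-1=2n+1+j$. For the remaining interior terms, I would apply Lemma~\ref{lem:symsum} to each of the three convolutions, with lengths $2n+3+j$, $2n+2+j$ and $2n+1+j$. The parity split then gives the factors $2\sum$ and the squared middle terms. One should check that after removing the endpoint terms the surviving range is $\nu=3,\dots,n+1$ for $b_2$, $\nu=2,\dots,n$ for $b_1$ and $\nu=1,\dots,n$ for $b_0$. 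In the $b_1$ bracket with $j=1$, the odd length $2n+3$ pairs $(u)_{n+1}$ with $(u)_{n+2}$, which explains the extra term $2(u)_{n+1}(u)_{n+2}$; the analogous middle terms are $\delta_{j,0}(u)_{n+1}^2$ for $b_1$ and $\delta_{j,1}(u)_{n+2}^2$ for $b_2$. Since $(n,j)$ with $n\ge1$ and $j\in\{0,1\}$ runs over all $N\ge 4$, this yields \eqref{eq:decouple-j}.

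The cases $N=0,1,2,3$ I would write out by hand and solve for $(u)_{N+1}$ in order to get $M_1,\dots,M_4$. For $N=0$ the lemma gives $a_0(u)_0+a_1(u)_1+b_1(u)_0^2+2b_2(u)_0(u)_1=0$, hence $(u)_1=M_1/(a_1+2b_2(u)_0)$. The denominators are $a_1-Nc_2+2b_2(u)_0$, that is $E_{0,k-3}$ with $k=N+1$, because $E_{0,j}$ carries the factor $(2+j)c_2$. Conversely, (b) is just a rearrangement of Lemma~\ref{L1}(b) for all $N\ge0$, so the lemma returns (a). The class-zero condition is tied to the degree bounds together with $|c_2|+|a_1|+|b_2|>0$ and the hypothesis of Proposition~\ref{proposition-simplification}.

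The main obstacle I expect is the nonvanishing claim $E_{n,j}\neq0$, i.e.\ $a_1-Nc_2+2b_2(u)_0\ne0$ for all $N\ge0$. I would argue from regularity. If the coefficient of the top moment vanished at some $N$, the $N$-th relation would give no information on $(u)_{N+1}$. I would then try to show, as in the semiclassical analogue where $a_1-Nc_2\neq0$ follows from regularity, that the leading coefficients of the functional equation applied to $P_{N}$ force $\langle u,P_N^2\rangle=0$, contradicting regularity. Concretely, the plan is to evaluate $\langle (\Phi u)'+\psi u+B(x^{-1}u^2),P_{N}\rangle$, or the equivalent identity on $x^{N}$ reduced modulo lower orthogonality, and identify its leading term as $(a_1-Nc_2+2b_2)\langle u,x^{N+1}P_N\rangle$-type expressions. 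If this fails, I would take $E_{n,j}\neq0$ as the stated admissibility condition, which is how the statement phrases it, and keep the rest of the argument purely algebraic bookkeeping.
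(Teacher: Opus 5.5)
Your proposal is correct and follows essentially the same route as the paper: you specialize Lemma~\ref{L1} to $t=q=2$, $p=1$, solve the cases $N=0,1,2,3$ by hand to get $M_1,\dots,M_4$, and for $N=2n+2+j\ge 4$ you peel off the endpoint terms of the three convolutions (which produces $E_{n,j}$, $F_{n,j}$, $G_{n,j}$) and apply Lemma~\ref{lem:symsum} to the remainder, just as the paper does for even and odd indices. The paper never derives $E_{n,j}\neq 0$ from regularity either; it builds the nonvanishing into the statement, so your fallback reading is the one the paper uses, and the speculative regularity argument (which as sketched does not yet deal with the nonlinear term $B(x^{-1}u^2)$) is not needed.
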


\begin{proof}
By Lemma~\ref{L1} with $t=2$, $p=1$, $q=2$, the form $u$ is a Laguerre–Hahn of class zero if and only if
\begin{align}
 -n c_0 (u)_{n-1} - n c_1 (u)_n - n c_2 (u)_{n+1} + a_0 (u)_n + a_1 (u)_{n+1}\nonumber\\
 + \sum_{k=0}^{2} b_{k} \sum_{\mu=0}^{n+k-1}(u)_\mu (u)_{n+k-\mu-1}=0,\quad n\ge0, \label{eq:gen}
\end{align}
with $|c_2| + |a_1| + |b_2| > 0$.\\
Let us define
\[
S_k(n):=\sum_{\mu=0}^{n+k-1}(u)_\mu (u)_{n+k-\mu-1},\quad k=0,1,2,
\]
so that \eqref{eq:gen} can be written as
\begin{align*}
K(n) := & -n c_0 (u)_{n-1} - n c_1 (u)_n - n c_2 (u)_{n+1} + a_0 (u)_n + a_1 (u)_{n+1}\\
&+ b_0 S_0(n) + b_1 S_1(n) + b_2 S_2(n)=0, \quad n\geq0,
\end{align*}
i.e., 
$$
K(k)=0, \quad k\in\{0, 1, 2, 3\}
$$
and
$$K(2n+3)=0,\quad K(2n+2)=0, \quad n\geq1.
$$
It is straightforward to verify that, for $k=0, 1, 2, 3$, the condition $K(k)=0$ yields, respectively,
\begin{align*}
\big[a_1 + 2b_2 (u)_0\big](u)_1 =& -a_0 (u)_0 - b_1 (u)_0^2,\\
\big[a_1 - c_2 + 2b_2 (u)_0\big](u)_2 =& c_0 (u)_0 - (a_0 - c_1)(u)_1 \\
& - b_0 (u)_0^2 - 2b_1 (u)_0 (u)_1 - b_2 (u)_1^2,\\
\big[a_1 -2c_2 + 2b_2 (u)_0 \big] (u)_3 =& 2c_0 (u)_1 + 2c_1 (u)_2 - a_0 (u)_2 - 2b_0 (u)_0 (u)_1 \notag\\
& - 2b_1 (u)_0 (u)_2 - b_1 (u)_1^2 - 2b_2 (u)_1 (u)_2,\\  
\big[ -3c_2 + a_1 + 2b_2 (u)_0 \big] (u)_4 =& 3c_0 (u)_2 + 3c_1 (u)_3 - a_0 (u)_3 \notag\\
& - 2b_0 (u)_0 (u)_2 - b_0 (u)_1^2 - 2b_1 (u)_0 (u)_3 \notag\\
& - 2b_1 (u)_1 (u)_2 - 2b_2 (u)_1 (u)_3 - b_2 (u)_2^2,
\end{align*}
which are exactly the cases $k=0,1,2,$ and $3$ of equation \eqref{mom-k}.\\
It remains to verify that the conditions $K(2n+3)=0$ and $K(2n+2)=0$, for $n\geq 1$, are equivalent to equation \eqref{eq:decouple-j} with $j=0$ and $j=1$, respectively.

Applying Lemma~\ref{lem:symsum} to each sum yields
\[
\begin{aligned}
S_0(2n+2)=&\sum_{\mu=0}^{2n+1}(u)_\mu (u)_{2n+1-\mu}
        =2\sum_{\mu=0}^{n}(u)_\mu (u)_{2n+1-\mu} \\
        =&2(u)_0(u)_{2n+1}+2\mathcal{A}_n^{(2)}, \ n\geq1,\\[2pt]
S_1(2n+2)=&\sum_{\mu=0}^{2n+2}(u)_\mu (u)_{2n+2-\mu}
        =2\sum_{\mu=0}^{n}(u)_\mu (u)_{2n+2-\mu}+(u)_{n+1}^2\\
        =&2(u)_0(u)_{2n+2}+2(u)_1(u)_{2n+1}+2\mathcal{B}_n^{(2)}+(u)_{n+1}^2,\ n\geq1,\\[2pt]
S_2(2n+2)=&\sum_{\mu=0}^{2n+3}(u)_\mu (u)_{2n+3-\mu}
        =2\sum_{\mu=0}^{n+1}(u)_\mu (u)_{2n+3-\mu}\\
    =&2(u)_0(u)_{2n+3}+2(u)_1(u)_{2n+2}+2(u)_2(u)_{2n+1}+2\mathcal{C}_n^{(2)}, \ n\geq1,
\end{aligned}
\]
where
$$\mathcal{A}_n^{(2)}:=\sum_{\nu=1}^{n}(u)_\nu (u)_{2n+1-\nu},\  \mathcal{B}_n^{(2)}:=\sum_{\nu=2}^{n}(u)_\nu (u)_{2n+2-\nu},\  \mathcal{C}_n^{(2)}:=\sum_{\nu=3}^{n+1}(u)_\nu (u)_{2n+3-\nu},\ n\geq1.
$$
Substituting these expressions for $S_0$, $S_1$, and $S_2$ into $K(2n+2)$ yields
\[
\begin{aligned}
K(2n+2)=&-\,(2n+2)c_0(u)_{2n+1}-(2n+2)c_1(u)_{2n+2}-(2n+2)c_2(u)_{2n+3}\\
&+a_0(u)_{2n+2}+a_1(u)_{2n+3}\\
&+b_0\bigl[2(u)_0(u)_{2n+1}+2\mathcal{A}_n^{(2)}\bigr]\\
&+b_1\bigl[2(u)_0(u)_{2n+2}+2(u)_1(u)_{2n+1}+2\mathcal{B}_n^{(2)}+(u)_{n+1}^2\bigr]\\
&+b_2\bigl[2(u)_0(u)_{2n+3}+2(u)_1(u)_{2n+2}+2(u)_2(u)_{2n+1}+2\mathcal{C}_n^{(2)}\bigr], \ n\geq1.
\end{aligned}
\]
According to the latter, and taking into account the fact that $K(2n+2)=0$, we obtain exactly the relation \eqref{eq:decouple-j} with $j=0$.

\medskip
Now, using again Lemma~\ref{lem:symsum},
\begin{align*}
S_0(2n+3)&=\sum_{\mu=0}^{2n+2}(u)_\mu (u)_{2n+2-\mu}
        =2\sum_{\mu=0}^{n}(u)_\mu (u)_{2n+2-\mu}+(u)_{n+1}^2\\
        &=2(u)_0(u)_{2n+2}+2\mathcal{A}_n^{(3)}+(u)_{n+1}^2, \ n\geq1,\\[2pt]
S_1(2n+3)&=\sum_{\mu=0}^{2n+3}(u)_\mu (u)_{2n+3-\mu}
        =2\sum_{\mu=0}^{n+1}(u)_\mu (u)_{2n+3-\mu}\\
        &=2(u)_0(u)_{2n+3}+2(u)_1(u)_{2n+2}+2 \mathcal{B}_n^{(3)}, \ n\geq1,\\[2pt]
S_2(2n+3)&=\sum_{\mu=0}^{2n+4}(u)_\mu (u)_{2n+4-\mu}=2\sum_{\mu=0}^{n+1}(u)_\mu (u)_{2n+4-\mu}+(u)_{n+2}^2\\
&=2(u)_0(u)_{2n+4}+2(u)_1(u)_{2n+3}+2(u)_2(u)_{2n+2}+2\mathcal{C}_n^{(3)}+(u)_{n+2}^2, \ n\geq1,
\end{align*}
where
$$
\mathcal{A}_n^{(3)}:=\sum_{\nu=1}^{n}(u)_\nu (u)_{2n+2-\nu},\ 
\mathcal{B}_n^{(3)}:=\sum_{\nu=2}^{n+1}(u)_\nu (u)_{2n+3-\nu},\
\mathcal{C}_n^{(3)}:=\sum_{\nu=3}^{n+1}(u)_\nu (u)_{2n+4-\nu},\ n\geq1.
$$
Substituting these expressions for $S_0,S_1,S_2$ into $K(2n+3)$ yields
\[
\begin{aligned}
K(2n+3)=&-\,(2n+3)c_0(u)_{2n+2}-(2n+3)c_1(u)_{2n+3}-(2n+3)c_2(u)_{2n+4}\\
&+a_0(u)_{2n+3}+a_1(u)_{2n+4}\\
&+b_0\bigl[2(u)_0(u)_{2n+2}+2\mathcal{A}_n^{(3)}+(u)_{n+1}^2\bigr]\\
&+b_1\bigl[2(u)_0(u)_{2n+3}+2(u)_1(u)_{2n+2}+2\mathcal{B}_n^{(3)}\bigr]\\
&+b_2\bigl[2(u)_0(u)_{2n+4}+2(u)_1(u)_{2n+3}+2(u)_2(u)_{2n+2}+2\mathcal{C}_n^{(3)}+(u)_{n+2}^2\bigr], \ n\geq1.
\end{aligned}
\]
Based on the latter and the condition $K(2n+3)=0$, we recover exactly the relation \eqref{eq:decouple-j} with $j=1$, which completes the proof of the
theorem.
\end{proof}

\begin{remark}
For a Laguerre form of class zero, the recurrence coefficients $\beta_0$ and $\gamma_1$ are directly expressed in terms of $(u)_0$ and the coefficients of $\Phi(x)$, $\psi(x)$, and $B(x)$ in the functional equation \eqref{(4.1.2.1)}. Indeed, using the second-order linear recurrence relation  \eqref{ic_TTRR}-\eqref{TTRR}, and the regularity condition \eqref{regular}, we obtain 
$$
\beta_0 = \frac{(u)_1}{(u)_0},
$$
and
$$
\gamma_1 = \frac{(u)_2 (u)_0 - (u)_1^2}{(u)_0^2}.
$$
From the expressions of $(u)_1$ and $(u)_2$ in \eqref{mom-k}, we easily deduce that
$$
\beta_0 = \frac{M_1}{(u)_0 E_{0,-2}}= \frac{-a_0(u)_0 - b_1(u)_0^2}{(u)_0 \bigl(a_1 + 2b_2(u)_0\bigr)},
$$
and
$$
\gamma_1 = \frac{ (u)_0 E_{0,-2}^2 \bigl(c_0 (u)_0 - b_0 (u)_0^2\bigr) - (u)_0 E_{0,-2} \bigl(a_0 - c_1 + 2b_1 (u)_0\bigr) M_1 - M_1^2 \bigl( (u)_0 b_2 + E_{0,-1} \bigr) }{ (u)_0^2 \, E_{0,-1} \, E_{0,-2}^2 },
$$
where $M_1 = -a_0 (u)_0 - b_1 (u)_0^2$, $E_{0,-2} = a_1 + 2b_2 (u)_0$, and $E_{0,-1} = a_1 - c_2 + 2b_2 (u)_0$.
\end{remark}

\section{Moments recurrences for the ten canonical Laguerre-Hahn families of class zero}\label{Section4}

In this section, Theorem~\ref{thm:main} is applied to each of the ten canonical Laguerre-Hahn families of class zero classified in~\cite{Article-2-Soummi}, providing for each one the explicit recurrence relations satisfied by its moments.

For each canonical family, it suffices to compute the explicit formulas of $M_k$, $E_{n,j}$, $F_{n,j}$, and $G_{n,j}$, which depend only on the coefficients $c_i$, $a_i$, and $b_i$ (see \eqref{polcoeff_funEq}) of the polynomials $\Phi(x)$, $\psi(x)$ and $B(x)$ appearing in the funcional equation \eqref{(4.1.2.1)}, and the first moment $(u)_0=1$. This procedure is applied to all ten families in the following sections.

For each family, we begin by specifying the regularity conditions and the polynomials  $\Phi(x)$, $\psi(x)$, and $B(x)$. Next, we present the formulas of the moments recurrences $M_k$, $k=1,2,3,4$, $E_{n,j}$, $F_{n,j}$, and $G_{n,j}$ obtained with {\it Mathematica$^{\circledR}$}.

\subsection{Two cases analogous to the Hermite case}

\subsubsection{Case 1 analogous to Hermite}

\noindent {\bf Regularity conditions}
$$\lambda, \rho, \tau \in \mathbb{C},\quad \rho \neq 0,\quad \tau\neq -n, \quad n \geq 1.$$

\noindent {\bf Functional equation}
\begin{eqnarray}
&&\Phi(x) = 1, \quad
\psi(x) = 2\displaystyle\frac{2-\rho}{\rho}x - \frac{4\lambda}{\rho},\notag\\
&&B(x) = 2\displaystyle\frac{\rho-1}{\rho}x^2 + 2\lambda \displaystyle\frac{2-\rho}{\rho}x + 1 - \rho(\tau+1) - \displaystyle\frac{2\lambda^2}{\rho}.\notag
\end{eqnarray}

\noindent {\bf Moments recurrence coefficients}
\begin{eqnarray}
&& M_1 = 2 \lambda, \quad M_2  =2 \lambda^2+\rho(1+ \tau), \quad
M_3  = 2 \lambda ^3+2 \lambda  \rho  (\tau +1),\notag \\
&&M_4 = 2 \lambda ^4+3 \lambda ^2 \rho  (\tau +1)+\frac{1}{2} \rho  (\tau +1) (\rho  (\tau +1)+\tau +2).\notag\\
&&E_{n, j} =2, \quad
F_{n, j}=0, \quad
G_{n, j}=-j-2(1+n+\tau).\notag
\end{eqnarray}

\subsubsection{Case 2 analogous to Hermite}
\noindent {\bf Regularity conditions}
$$\lambda, \rho \in \mathbb{C},\quad \rho \neq 0.$$

\noindent {\bf Functional equation}
$$\Phi(x) = 1, \quad \psi(x) = -2x, \quad
B(x) = 2x^2-2\lambda x+1-\rho.$$

\noindent {\bf Moments recurrence coefficients}
\begin{eqnarray}
&& M_1= 2 \lambda,\quad
M_2=2 \lambda ^2+\rho,\quad
M_3 = 2 \lambda  \left(\lambda ^2+\rho \right) ,\notag\\
&&M_4 =2 \lambda ^4+3 \lambda ^2 \rho +\frac{1}{2} \rho  (\rho +1).\notag\\
&&
E_{n, j}=2, \quad
F_{n, j}=0, \quad
G_{n, j}=-j-2n. \notag
\end{eqnarray}

\subsection{Two cases analogous to the Laguerre case}

\subsubsection{Case 1 analogous to Laguerre}
\noindent {\bf Regularity conditions}
$$\lambda, \rho, \tau, \alpha\in\mathbb{C}, \quad \rho\neq 0, \quad \alpha+\tau\neq-(n+1),\quad \tau\neq-(n+1), \quad n\geq 0.$$

\noindent {\bf Functional equation}
\begin{eqnarray}
&&\Phi(x)=x,\quad
\psi(x)=\displaystyle\frac{2-\rho}{\rho}x+\frac{\rho-2}{\rho}(2\tau+\alpha+1)-\frac{2\lambda}{\rho}, \notag \\
&&B(x)=\displaystyle\frac{\rho-1}{\rho}x^2+\left\{2\displaystyle\frac{1-\rho}{\rho}(2\tau+\alpha+1)+\lambda\displaystyle\frac{2-\rho}{\rho}\right\} x \notag \\
&&\quad\quad\quad~ +(2\tau+\alpha+1+\lambda)\left\{\displaystyle\frac{\rho-1}{\rho}(2\tau+\alpha+1)+1-\frac{\lambda}{\rho}\right\} \notag \\
&&\quad\quad\quad~ -\rho(\tau+1)(\tau+\alpha+1). \notag 
\end{eqnarray}

\noindent {\bf Moments recurrence coefficients}
\begin{eqnarray}
M_1& = & \alpha +\lambda +2 \tau +1,\notag\\
M_2 & = & \rho  (\tau +1) (\alpha +\tau +1)+(\alpha +\lambda +2 \tau +1)^2 ,\notag\\
 M_3 & = & \rho  (\tau +1) (\alpha +\tau +1) (3 \alpha +2 \lambda +6 \tau +5)+(\alpha +\lambda +2 \tau
   +1)^3 ,\notag\\
 M_4& = &  \rho ^2 (\tau +1)^2 (\alpha +\tau +1)^2+\rho  (\tau +1) (\alpha +\tau +1)\notag\\
&& \left(3 \lambda ^2+4 \lambda  (2 \alpha +4 \tau +3)+25 \alpha  \tau +22 \alpha + 6 \alpha ^2+25 \tau
   ^2+44 \tau +2\right)\notag\\
&&+(\alpha+\lambda +2 \tau +1)^4 .\notag\\
E_{n, j} & = &1, \quad
F_{n, j}=-3-j-2 n-\alpha-2 \tau, \quad
G_{n, j}=2 \lambda -2 \tau  (\alpha +\tau ).\notag
\end{eqnarray}

\subsubsection{Case 2 analogous to Laguerre}
\noindent {\bf Regularity conditions}
$$\lambda, \rho, \alpha \in \mathbb{C},\quad \rho \neq 0,\quad \alpha \neq -n, \quad n \geq 1.$$

\noindent {\bf Functional equation}
\begin{eqnarray}
&&\Phi(x) = x, \quad
\psi(x) = -x + \alpha - 1, \notag \\
&&B(x) = x^2 + \big\{2(1-\alpha) - \lambda\big\}x + \alpha(\alpha - 1 + \lambda) - \rho. \notag
\end{eqnarray}

\noindent {\bf Moments recurrence coefficients}
\begin{eqnarray}
M_1 & = & \alpha +\lambda -1 ,\notag\\
M_2 & = & \lambda ^2+2 (\alpha -1) \lambda+\alpha ^2 -2 \alpha +\rho +1 ,\notag\\
M_3 & = & \lambda ^3+3 (\alpha -1) \lambda ^2+\lambda  \left(3 \alpha ^2-6 \alpha +2 \rho +3\right)+
(3 \alpha -1) \rho +(\alpha -1)^3 ,\notag\\
M_4 & = &\lambda ^4+ 4 (\alpha -1) \lambda^3+3 \lambda ^2 \left(2 \alpha ^2-4 \alpha +\rho +2\right)\notag\\
&&+4 \lambda  \left(\alpha ^3-3 \alpha ^2+2 \alpha  \rho +3 \alpha -\rho -1\right)
+\rho ^2+3 \left(2 \alpha ^2-\alpha +1\right) \rho +(\alpha -1)^4 .\notag\\
E_{n, j}& = &1, \quad
F_{n, j}=-1-j-2 n-\alpha, \quad
G_{n, j}=2(-1+\alpha+\lambda).\notag
\end{eqnarray}

\subsection{Four cases analogous to the Bessel case}

\subsubsection{Case 1 analogous to Bessel}
\noindent {\bf Regularity conditions}
$$\lambda, \rho, \tau, \alpha \in \mathbb{C}, \quad
\rho \neq 0, \quad \tau+1 \neq -n, \quad \tau+2\alpha-1 \neq -n, \quad \tau+\alpha \neq -n/2, \quad n\geq 0.$$

\noindent {\bf Functional equation}
\begin{eqnarray}
&&\Phi(x)=x^2 \notag \\
&&\psi(x)=2\displaystyle\left\{\frac{1-\rho}{\rho}+\frac{\rho-2}{\rho}(\tau+\alpha)\right\}x+\frac{2}{\rho}(2(\tau+\alpha)-1)\beta_0-2\frac{1-\alpha}{\tau+\alpha}, \notag \\
&&B(x)=\displaystyle\frac{1-\rho}{\rho}\left\{2(\tau+\alpha)-1\right\}x^2+2\left\{\frac{1-\alpha}{\tau+\alpha}+\beta_0\left[(\tau+\alpha)\frac{\rho-2}{\rho}+\frac{1}{\rho}\right]\right\}x \notag \\
&&\quad\quad\quad~~ +\displaystyle\left\{2(\tau+\alpha)\frac{\rho-1}{\rho}+\frac{1}{\rho}\right\}\beta_0^2-b_1\beta_0-\rho\frac{(\tau+1)(\tau+2\alpha-1)}{(2\tau+2\alpha-1)(\tau+\alpha)^2}. \notag
\end{eqnarray}

\noindent {\bf Moments recurrence coefficients}
\begin{eqnarray}
 M_1& = & \frac{2 (\alpha -1)}{\alpha +\tau -1}-2 \lambda  (\alpha +\tau ),\notag\\
 M_2& = & \frac{\rho  (\tau +1) (2 \alpha +\tau -1)}{(\alpha +\tau )^2 (2 \alpha +2 \tau -1)}
 -\frac{(2 \alpha +2 \tau+1) \left(\lambda  (\alpha +\tau -1) (\alpha +\tau )-\alpha +1\right)^2}{(\alpha +\tau -1)^2 (\alpha +\tau )^2},\notag\\
 M_3& = &-\Big(2 \rho  (\tau +1) (2 \alpha +\tau -1) \left(2 \lambda  (\alpha +\tau -1) (\alpha +\tau ) (\alpha +\tau +1)\right.\notag\\
 &&\left.-(\alpha -1) (3 \alpha +3 \tau +1)\right)\Big)\notag\\
 &&/\Big((\alpha +\tau -1) (\alpha +\tau )^3 (2 \alpha +2 \tau -1) (2 \alpha +2 \tau
   +1)\Big)\notag\\
   &&-\frac{2 (\alpha +\tau +1) \left(\lambda  (\alpha +\tau -1) (\alpha +\tau )-\alpha +1\right)^3}{(\alpha +\tau -1)^3 (\alpha +\tau )^3},\notag\\
 M_4& = &-\frac{\rho ^2 (\tau +1)^2 (2 \alpha +\tau -1)^2 (2 \alpha +2 \tau +3)}{(\alpha +\tau )^4 (2
   \alpha +2 \tau -1)^2 (2 \alpha +2 \tau +1)^2} \notag\\
   && -\frac{(2 \alpha +2 \tau +3) \left(\lambda  (\alpha +\tau -1) (\alpha +\tau )-\alpha +1\right)^4}{(\alpha +\tau -1)^4 (\alpha +\tau )^4}+\Big(\rho  (\tau+1) (2 \alpha +\tau -1)\notag\\
 &&  \Big(
  3 \lambda ^2 (\alpha +\tau -1)^2 (\alpha +\tau )^2 (\alpha +\tau +1) (2 \alpha +2 \tau +1) (2 \alpha +2 \tau +3)\notag\\
  &&
  -4 (\alpha -1) \lambda  (\alpha+\tau -1) (\alpha +\tau ) (2 \alpha +2 \tau +1)^2 (2 \alpha +2 \tau +3)\notag\\
  &&
  \left(15 \alpha ^2-46 \alpha +26\right) \tau ^3+\left(65 \alpha ^3-107 \alpha ^2+4 \alpha +36\right) \tau
   ^2\notag\\
   &&+2 (\alpha -1) \left(35 \alpha ^3-\alpha ^2-25 \alpha -11\right) \tau \notag\\
   &&+2 (\alpha -1)^2 \left(12 \alpha
   ^3+18 \alpha ^2+11 \alpha +3\right)+(1-5 \alpha ) \tau ^4-\tau ^5
 \Big)\notag\\
   &&/\Big((\alpha +\tau -1)^2 (\alpha +\tau )^4 (\alpha +\tau +1) (2 \alpha +2 \tau -1) (2 \alpha +2
   \tau +1)^2\Big).\notag \\
E_{n, j}&=& -j-2(1+n+\alpha+\tau), \quad 
F_{n, j}= 2 \lambda -\frac{2 (\alpha -1)}{\alpha +\tau -1}, \notag \\
G_{n,j} & = & \left[ \beta_0^2 + \frac{\gamma_1}{\rho}\bigl(2\tau+2\alpha+2\rho-1\bigr) \right] =\notag\\
&&\Big(2 \left(\lambda ^2 (\alpha+\tau -1)^2 (\alpha +\tau )^2 (2 \alpha +2 \tau +1)\right.\notag\\
   &&-2 (\alpha -1) \lambda  (\alpha+\tau -1) (\alpha +\tau ) (2 \alpha +2 \tau +1)\notag\\
   &&\left. -\tau ^4-2 (2 \alpha -1) \tau ^3-\alpha  (5 \alpha -4) \tau ^2 -2 (\alpha -1) \alpha ^2 \tau
   +2 (\alpha -1)^2\right)\Big)\notag\\
   &&/\Big((\alpha +\tau -1)^2
   (\alpha +\tau )^2 (2 \alpha +2 \tau +1)\Big)\notag\\
&&-\frac{4 \rho  (\tau +1) (2 \alpha +\tau
   -1)}{(\alpha +\tau )^2 (2 \alpha +2 \tau -1) (2 \alpha +2 \tau +1)}.\notag
\end{eqnarray}

\subsubsection{Case 2 analogous to Bessel}
\noindent {\bf Regularity conditions}
$$\lambda,\rho,\alpha \in \mathbb{C},  \quad \rho \neq 0, \quad \alpha \neq \displaystyle\frac{n+3}{2}, \quad n\geq 0.$$

\noindent {\bf Functional equation}
\begin{eqnarray}
&&\Phi(x) = x^2, \quad \psi(x) = -2\alpha x - 2, \notag \\
&&B(x) = (2\alpha-1)x^2 + \{2(1-\alpha)\lambda + 2\}x - 2\lambda - \rho(2\alpha-3). \notag
\end{eqnarray}

\noindent {\bf Moments recurrence coefficients}
\begin{eqnarray}
 M_1& = &2 (\alpha -1) \lambda ,\quad
 M_2 =  (2 \alpha -3) \left(\lambda ^2+\rho \right) ,\notag\\
 M_3& = & 2 (\alpha -2) \lambda ^3+4 (\alpha -2) \lambda  \rho -2 \rho ,\notag\\
 M_4& = & (2 \alpha -5) \lambda
   ^4+3 (2 \alpha -5) \lambda ^2 \rho -\frac{2 (2 \alpha -5) \lambda  \rho }{\alpha -2} +
   \frac{\rho  \left((\alpha -2) (2 \alpha -5) \rho +2\right)}{\alpha -2}.\notag\\
E_{n, j}& = & -4-j-2n+2\alpha, \quad
F_{n, j}=2 (\lambda +1), \quad
G_{n, j}=2 \left(\lambda ^2+2 \rho \right).\notag
\end{eqnarray}

\subsubsection{Case 3 analogous to Bessel}
\noindent {\bf Regularity conditions}
$$\lambda, \rho, \alpha \in\mathbb{C}, \quad \rho\neq 0, \quad \alpha\neq \displaystyle\frac{1-n}{2}, \quad n\geq 0.$$

\noindent {\bf Functional equation}
\begin{eqnarray}
&&\Phi(x)=x^2, \quad
\psi(x)=2(\alpha-2)x+2, \notag \\
&&B(x)=-(2\alpha-3)x^2+2\{(\alpha-1)\lambda-1\}x+2\lambda+\rho(2\alpha-1). \notag
\end{eqnarray}

\noindent {\bf Moments recurrence coefficients}
\begin{eqnarray}
 M_1& = & -2 (\alpha -1) \lambda,\quad
 M_2 =  -(2 \alpha -1) \left(\lambda ^2+\rho \right) ,\quad
 M_3 =  2 \rho -2 \alpha  \left(\lambda ^3+2 \lambda  \rho \right) ,\notag\\
 M_4& = & (-2 \alpha -1) \lambda ^4-3 (2 \alpha
   +1) \lambda ^2 \rho +\frac{2 (2 \alpha +1) \lambda  \rho }{\alpha }
    -\frac{\rho  \left(\alpha  (2 \alpha +1) \rho +2\right)}{\alpha }.\notag\\
E_{n, j} & =& -j-2(n+\alpha), \quad
F_{n, j}=2 (\lambda -1), \quad
G_{n, j}=2 \left(\lambda ^2+2 \rho \right).\notag
\end{eqnarray}

\subsubsection{Case 4 analogous to Bessel}
\noindent {\bf Regularity conditions}
$$\lambda, \rho, \alpha \in\mathbb{C}, \quad \rho\neq0, \quad \alpha\neq \displaystyle\frac{1-n}{2}, \quad n\geq-1.$$

\noindent {\bf Functional equation}
\begin{eqnarray}
&&\Phi(x)=x^2,\quad
\psi(x)=2\displaystyle\left\{\frac{1-\rho}{\rho}+\alpha\frac{\rho-2}{\rho}\right\} x+\frac{2}{\rho}(2\alpha-1)\beta_0-\frac{2}{\alpha}, \notag \\
&&B(x)=\displaystyle\frac{\rho-1}{\rho}(1-2\alpha)x^2+2\left\{\left[\alpha+\frac{1}{\rho}(1-2\alpha)\right]\beta_0+\frac{1}{\alpha}\right\} x \notag \\
&&\quad\quad\quad~~
+ \displaystyle\frac{2\alpha-1}{\rho}\beta_0^2 - \frac{2\beta_0}{\alpha} + \frac{\rho}{(2\alpha-1)\alpha^2}. \notag 
\end{eqnarray}

\noindent {\bf Moments recurrence coefficients}
\begin{eqnarray}
 M_1& = & -\frac{2 \left(\alpha ^2 \lambda -\alpha  \lambda +1\right)}{\alpha -1},\notag\\
 M_2& = & -\frac{\rho }{\alpha ^2 (2 \alpha -1)} -\frac{(2 \alpha +1) \left((\alpha -1) \alpha  \lambda +1\right)^2}{(\alpha -1)^2 \alpha
   ^2},\notag\\
 M_3& = & -\frac{2 \rho  \left(2 (\alpha -1) (\alpha +1) \alpha  \lambda +3 \alpha +1\right)}{(\alpha -1) \alpha ^3 (2
   \alpha -1) (2 \alpha +1)}-\frac{2 (\alpha +1) \left((\alpha -1) \alpha  \lambda +1\right)^3}{(\alpha -1)^3 \alpha ^3} ,\notag\\
 M_4& = & -\frac{(2 \alpha +3) \rho ^2}{\alpha ^4 (2 \alpha -1)^2 (2 \alpha +1)^2}
-\frac{(2 \alpha +3) \left(\alpha^2 \lambda -\alpha  \lambda +1\right)^4}{(\alpha -1)^4 \alpha ^4}\notag\\
   &&-\Big(\rho  \left(3 (\alpha -1)^2 (\alpha +1) (2 \alpha +1) (2 \alpha +3) \alpha ^2 \lambda ^2
  \right.\notag\\
   &&\left.+4(\alpha -1) (2 \alpha +1)^2 (2 \alpha +3) \alpha  \lambda +25 \alpha ^3+37 \alpha ^2+22 \alpha +6\right)\Big)\notag\\
&&/\Big((\alpha -1)^2 \alpha ^4 (\alpha +1) (2 \alpha -1) (2 \alpha +1)^2\Big) .\notag\\
E_{n, j}&=& -j-2(n+\alpha+1), \quad
F_{n, j}= 2 \left(\frac{1}{\alpha -1}+\lambda \right), \notag\\
G_{n, j}&=& 2\left[ \beta_0^2 + \frac{2\alpha+2\rho-1}{\alpha^2(4\alpha^2-1)} \right]=\notag\\
&&\frac{4 \rho }{\alpha ^2 (2 \alpha -1) (2 \alpha +1)}\notag\\
&&+\frac{2 \left((\alpha -1)^2 (2 \alpha +1) \alpha ^2 \lambda ^2+\alpha ^2+2 (\alpha -1) (2 \alpha +1) \alpha  \lambda +2\right)}{(\alpha -1)^2 \alpha ^2 (2 \alpha +1)}. \notag
\end{eqnarray}

\subsection{Two cases analogous to the Jacobi case}

\subsubsection{Case 1 analogous to Jacobi}
\noindent {\bf Regularity conditions}
\begin{eqnarray}
&&\lambda, \rho, \alpha, \beta, \tau \in\mathbb{C},\quad \rho\neq0, \notag \\
&&\tau\neq-n-1,\quad \tau+\alpha\neq-n-1,\quad \tau+\beta\neq-n-1,\quad 2\tau+\alpha+\beta\neq-n,\quad n\geq 0. \notag
\end{eqnarray}

\noindent {\bf Functional equation}
\begin{eqnarray}
&&\Phi(x)=x^2-1, \notag \\
&&\psi(x)=\left\{\frac{\rho-2}{\rho}(2\tau+\alpha+\beta+1)-1\right\}x+\frac{2}{\rho}(2\tau+\alpha+\beta+1)\beta_0-\frac{\alpha^2-\beta^2}{2\tau+\alpha+\beta+2}, \notag \\
&&B(x)=\frac{1-\rho}{\rho}(2\tau+\alpha+\beta+1)x^2+\left\{\left[\frac{\rho-2}{\rho}(2\tau+\alpha+\beta)+2\frac{\rho-1}{\rho}\right]\beta_0+\frac{\alpha^2-\beta^2}{2\tau+\alpha+\beta+2}\right\}x \notag \\
&&\quad\quad\quad\quad+(2\tau+\alpha+\beta+3)\gamma_1-1+\frac{1}{\rho}(2\tau+\alpha+\beta+1)\beta_0^2-\frac{\alpha^2-\beta^2}{2\tau+\alpha+\beta+2}\beta_0. \notag 
\end{eqnarray}

\noindent {\bf Moments recurrence coefficients}
\begin{eqnarray}
 M_1& = &\lambda  (-\alpha -\beta -2 \tau -2)-\frac{(\alpha -\beta ) (\alpha +\beta )}{\alpha +\beta +2 \tau } ,\notag\\
 M_2& = &-\frac{4 \rho  (\tau +1) (\alpha +\tau +1) (\beta +\tau +1) (\alpha +\beta +\tau
   +1)}{(\alpha +\beta +2 \tau +1) (\alpha +\beta +2 \tau +2)^2} \notag\\
   &&-\frac{(\alpha +\beta +2 \tau +3) \left(\lambda  (\alpha +\beta +2 \tau ) (\alpha +\beta +2 \tau +2)+(\alpha -\beta ) (\alpha +\beta ) \right)^2}{(\alpha
   +\beta +2 \tau )^2 (\alpha +\beta +2 \tau +2)^2} ,\notag\\
 M_3& = & -\frac{(\alpha +\beta +2 \tau +4) \left(\lambda  (\alpha +\beta +2 \tau ) (\alpha +\beta +2 \tau +2)+(\alpha -\beta ) (\alpha +\beta )\right)^3}{(\alpha
   +\beta +2 \tau )^3 (\alpha +\beta +2 \tau +2)^3}\notag\\
&&-\Big(4 \rho  (\tau +1) (\alpha +\tau +1) (\beta +\tau +1) (\alpha +\beta +\tau +1)\notag\\
&&
   \left(2 \lambda  (\alpha +\beta +2 \tau ) (\alpha +\beta +2 \tau +2) (\alpha +\beta +2 \tau +4)\right.\notag\\
   &&\left.+(\alpha -\beta ) (\alpha +\beta ) (3 \alpha +3
   \beta +6 \tau +8)\right)\Big)\notag\\
   &&/\Big((\alpha +\beta +2 \tau ) (\alpha +\beta +2 \tau +1) (\alpha +\beta
   +2 \tau +2)^3 (\alpha +\beta +2 \tau +3)\Big) ,\notag\\
 M_4& = &  -\frac{(\alpha +\beta +2 \tau +5) \left(\lambda  (\alpha +\beta +2 \tau ) (\alpha +\beta +2 \tau +2)+(\alpha -\beta ) (\alpha +\beta ) \right)^4}{(\alpha
   +\beta +2 \tau )^4 (\alpha +\beta +2 \tau +2)^4}\notag\\
   &&-\frac{16 \rho ^2 (\tau +1)^2 (\alpha +\tau +1)^2 (\beta +\tau +1)^2 (\alpha +\beta
   +\tau +1)^2 (\alpha +\beta +2 \tau +5)}{(\alpha +\beta +2 \tau +1)^2 (\alpha +\beta +2 \tau +2)^4 (\alpha +\beta +2 \tau
   +3)^2}\notag\\
   &&-\Big(4 \rho  (\tau +1) (\alpha +\tau +1) (\beta +\tau +1) (\alpha +\beta +\tau +1) \notag\\
   &&\left(3 \lambda ^2 (\alpha +\beta +2 \tau )^2 (\alpha +\beta +2 \tau +3) (\alpha +\beta +2 \tau +4) (\alpha +\beta +2 \tau +5) (\alpha +\beta
   +2 \tau +2)^2\right.\notag\\
     &&+8 (\alpha -\beta ) (\alpha +\beta ) \lambda  (\alpha +\beta +2 \tau ) (\alpha +\beta +2 \tau +3)^2 (\alpha +\beta +2\tau +5) (\alpha +\beta +2 \tau +2)\notag\\
&&  +2 \left(16 \tau ^7+8 \tau ^6 (7 \alpha +7 \beta +16)+4 \tau ^5 \left(19 \alpha ^2+42 \alpha  \beta +96 \alpha +19 \beta ^2+96 \beta +100\right)\right.\notag\\
&&+2 \tau ^4 \left(25 \alpha ^3+95 \alpha ^2
   \beta +224 \alpha ^2+95 \alpha  \beta ^2+480 \alpha  \beta +500 \alpha +25 \beta ^3+224 \beta ^2+500 \beta +304\right)\notag\\
&&+4 \tau ^3
   \left(10 \alpha ^4+25 \alpha ^3 \beta +64 \alpha ^3+30 \alpha ^2 \beta ^2+224 \alpha ^2 \beta +240 \alpha ^2+25 \alpha  \beta ^3+224
   \alpha  \beta ^2\right.\notag\\
   &&\left.+500 \alpha  \beta +304 \alpha +10 \beta ^4+64 \beta ^3+240 \beta ^2+304 \beta +112\right)
   \notag\\
   &&+2 \tau ^2 \left(19 \alpha
   ^5+30 \alpha ^4 \beta +92 \alpha ^4-5 \alpha ^3 \beta ^2+192 \alpha ^3 \beta +220 \alpha ^3-5 \alpha ^2 \beta ^3+200 \alpha ^2 \beta
   ^2 \right.\notag\\
   &&+720 \alpha ^2 \beta+448 \alpha ^2+30 \alpha  \beta ^4+192 \alpha  \beta ^3+720 \alpha  \beta ^2+912 \alpha  \beta +336 \alpha +19
   \beta ^5+92 \beta ^4\notag\\
   &&\left.+220 \beta ^3+448 \beta ^2+336 \beta +64\right)\notag\\
   &&+2 \tau  (\alpha +\beta ) \left(9 \alpha ^5+10 \alpha ^4 \beta +60
   \alpha ^4-15 \alpha ^3 \beta ^2+32 \alpha ^3 \beta +133 \alpha ^3-15 \alpha ^2 \beta ^3\right.\notag\\
   &&-56 \alpha ^2 \beta ^2+87 \alpha ^2 \beta +144
   \alpha ^2+10 \alpha  \beta ^4+32 \alpha  \beta ^3+87 \alpha  \beta ^2+304 \alpha  \beta +168 \alpha \notag\\
   &&\left.+9 \beta ^5+60 \beta ^4+133 \beta^3+144 \beta ^2+168 \beta +64\right)\notag\\
   &&+(\alpha +\beta )^2 \left(3 \alpha ^5+3 \alpha ^4 \beta +28 \alpha ^4-6 \alpha ^3 \beta ^2+4
   \alpha ^3 \beta +93 \alpha ^3-6 \alpha ^2 \beta ^3-48 \alpha ^2 \beta ^2\right.\notag\\
   &&-53 \alpha ^2 \beta +124 \alpha ^2+3 \alpha  \beta ^4+4
   \alpha  \beta ^3-53 \alpha  \beta ^2-104 \alpha  \beta +56 \alpha +3 \beta ^5+28 \beta ^4\notag\\
   &&\left.\left. \left.+93 \beta ^3+124 \beta ^2+56 \beta+32\right)\right)\right)\Big)\notag\\
   &&/\Big((\alpha +\beta +2 \tau )^2 (\alpha +\beta +2 \tau +1) (\alpha +\beta +2 \tau +2)^4 (\alpha +\beta +2 \tau +3)^2 (\alpha+\beta +2 \tau +4)\Big).\notag
\end{eqnarray}
\begin{align*}
E_{n, j}=&-\bigl(2n + 2\tau + \alpha + \beta + 4+  j \bigr), \quad
F_{n, j}=\frac{(\alpha -\beta ) (\alpha +\beta )}{\alpha +\beta +2 \tau }+2 \lambda,\\
G_{n, j}=& 2n + j + 2\beta_0^2 + 2\gamma_1 \left[ \frac{1}{\rho}(2\tau+\alpha+\beta+1) + 2 \right] \\
=& j +\frac{16 \rho  (\tau +1) (\alpha +\tau +1) (\beta +\tau +1) (\alpha +\beta +\tau +1)}{(\alpha +\beta +2 \tau +1) (\alpha +\beta +2 \tau+2)^2 (\alpha +\beta +2 \tau +3)}\notag\\
   &+\Big(2 \left(16 \tau ^6+16 \tau ^5 (3 \alpha +3 \beta +2 n+4)\right.\notag\\
   &+4 \tau ^4 \left(13 \alpha ^2+30 \alpha  \beta +40 \alpha +13 \beta ^2+40 \beta +4 n (5 \alpha +5 \beta +7)+24\right)\notag\\
   &+8 \tau ^3 \left(3 \alpha ^3+13 \alpha ^2 \beta +18 \alpha ^2+13 \alpha  \beta ^2+40 \alpha  \beta +24 \alpha +3 \beta ^3+18 \beta ^2+24 \beta \right.\notag\\
   &\left.+2 n(\alpha +\beta +2) (5 \alpha +5 \beta +4)+8\right)\notag\\
  &+4 \tau ^2 \left(\alpha ^4+9 \alpha ^3 \beta +14 \alpha ^3+16 \alpha ^2 \beta ^2+54 \alpha ^2 \beta +34 \alpha ^2+9 \alpha  \beta ^3+54 \alpha  \beta^2+72 \alpha  \beta \right.\notag\\
   &+24 \alpha +\beta ^4+14 \beta ^3+34 \beta ^2+24 \beta \notag\\
   &\left.+2 n \left(5 \alpha ^3+15 \alpha ^2 \beta +21 \alpha ^2+15\alpha  \beta ^2+42 \alpha  \beta +24 \alpha +5 \beta ^3+21 \beta ^2+24 \beta +6\right)+4\right)\notag\\
&+2 \tau  (\alpha+\beta ) (\alpha +\beta +2) \left(2 \alpha ^2 \beta +5 \alpha ^2+2 \alpha  \beta ^2+14 \alpha  \beta +10 \alpha +5 \beta ^2+10 \beta \right.\notag\\
&\left.+n \left(5 \alpha ^2+10 \alpha  \beta+18 \alpha +5 \beta ^2+18 \beta +12\right)+4\right)\notag\\
  &+(\alpha +\beta )^2 \left(\alpha ^3+3 \alpha ^2 \beta +7 \alpha ^2+3 \alpha  \beta ^2+6 \alpha  \beta +8 \alpha +\beta ^3+7 \beta ^2+8 \beta \right.\notag\\
   &\left.+n (\alpha +\beta+2)^2 (\alpha +\beta +3)+4\right)\notag\\
    &+\lambda ^2 (\alpha +\beta +2 \tau )^2 (\alpha +\beta +2 \tau +2)^2 (\alpha +\beta +2 \tau +3)\notag\\
    &\left.+2 (\alpha -\beta ) (\alpha +\beta ) \lambda  (\alpha +\beta +2 \tau ) (\alpha +\beta +2 \tau +2) (\alpha +\beta +2 \tau +3) \right)\Big)\notag\\
   &/\Big((\alpha +\beta +2 \tau )^2 (\alpha +\beta +2 \tau +2)^2
   (\alpha +\beta +2 \tau +3)\Big). 
\end{align*}

\subsubsection{Case 2 analogous to Jacobi}
\noindent {\bf Regularity conditions}
$$\lambda, \rho, \alpha, \beta \in \mathbb{C}, \quad \rho \neq 0, \quad \alpha \neq -n, \quad \beta \neq -n, \quad \alpha + \beta \neq -n, \quad n \geq 1.$$

\noindent {\bf Functional equation}
\begin{eqnarray}
&&\Phi(x) = x^2 - 1, \quad
\psi(x) = (\alpha + \beta - 2)x + \alpha - \beta,  \notag \\
&&B(x) = (1 - \alpha - \beta)x^2 + ((\alpha + \beta)\lambda + \beta - \alpha)x + (\alpha - \beta)\lambda - 1 + \rho(1 + \alpha + \beta). \notag
\end{eqnarray}

\noindent {\bf Moments recurrence coefficients}
\begin{eqnarray}
 M_1& = & -\lambda  (\alpha +\beta ),\quad
 M_2 = -(\alpha +\beta +1) \left(\lambda ^2+\rho \right) ,\notag\\
 M_3& = & \lambda ^3 (-\alpha -\beta -2)-2 \lambda  \rho  (\alpha +\beta +2)+\rho  (\alpha -\beta ) ,\notag\\
 M_4& = & \lambda ^4 (-\alpha
   -\beta -3)-3 \lambda ^2 \rho  (\alpha +\beta +3)+\frac{2 \lambda  \rho  (\alpha -\beta ) (\alpha +\beta
   +3)}{\alpha +\beta +2}\notag\\
 && -\frac{\rho  \left(\rho  (\alpha +\beta +2) (\alpha +\beta +3)+\alpha ^2-2 \alpha  \beta +\alpha +\beta ^2+\beta +2\right)}{\alpha +\beta +2}.\notag\\
E_{n, j}&=&-(2n + \alpha + \beta + 2 + j), \quad
F_{n, j}=-\alpha +\beta +2 \lambda, \notag\\
G_{n, j}& = & j+ 2(n + \lambda^2 + 2\rho).\notag
\end{eqnarray}

\section{Moments of Laguerre-Hahn forms of class~0}\label{Section5}

The formulas for the first four moments and the general recurrence relation established in Theorem \ref{thm:main} of the previous section, together with the coefficients corresponding to each family, can be implemented in a computer algebra system to generate the moments recursively.

In this section, we first present an algorithm for computing the moments $(u)_n$ of Laguerre–Hahn forms of class zero. We then provide several moments for each family, obtained through an implementation of this algorithm in {\it Mathematica$^{\circledR}$}. As the examples illustrate, the complexity of the resulting expressions grows rapidly with 
$n$. Consequently, the symbolic simplification and factorization capabilities of {\it Mathematica$^{\circledR}$} are indispensable for expressing these results in a manageable and comprehensible form.

\subsection{Algorithm}

\vspace{0.25cm}

\noindent {\bf Algorithm for the recursive computation of the moments of Laguerre-Hahn forms of class~0}

\vspace{0.25cm}

\begin{enumerate} 

\item {\bf Input Data}

\vspace{0.25cm}

\begin{enumerate}

\item Specify the parameters defining the form $u$: $\tau$, $\lambda$, $\rho$, $\alpha$, or $\beta$.

\vspace{0.25cm}

\item State the first moment $(u)_0=1$.

\vspace{0.25cm}

\item State the maximum number of moments to be computed, denoted by $2 nmax+1$, with $nmax\geq 2$.

\end{enumerate}

\vspace{0.25cm}

\item {\bf Computations }

\vspace{0.25cm}

\begin{enumerate}

\item Compute the four subsequent moments, $(u)_k$, $k=1,2,3,4$, using equations \eqref{mom-k} with the coefficients $M_k$ and $E_{0,k-3}$ specified for each family.

\vspace{0.25cm}

\item For $n=1,\ldots,nmax-1$, 

\vspace{0.25cm}

\begin{enumerate}
\item
Compute  $(u)_{2n+3}$ using \eqref{eq:decouple-j}, for $j=0$.

\vspace{0.25cm}

\item
Compute  $(u)_{2n+4}$ using \eqref{eq:decouple-j}, for $j=1$.

\end{enumerate}

\end{enumerate}

\vspace{0.25cm}

\item {\bf Results}

\vspace{0.25cm}

$(u)_n$, $n=0,...,2 nmax$. Moments should be written as polynomials in the parameters, with coefficients factorized whenever possible.

\end{enumerate}

\subsection{Moments}

\noindent {\bf Moments of Case~1 analogous to Hermite}
\begin{eqnarray}
(u)_0 & = & 1,\quad (u)_1= \lambda,\quad (u)_2 =\lambda ^2+\frac{1}{2} \rho  (\tau +1),\quad
(u)_3= \lambda ^3+\lambda  \rho  (\tau +1),\notag\\
(u)_4& =& \lambda ^4+\frac{3}{2} \lambda ^2 \rho  (\tau +1)+\frac{1}{4} \rho  (\tau +1) (\rho  (\tau +1)+\tau +2),\notag\\
(u)_5 & =& \lambda ^5+2 \lambda ^3 \rho  (\tau +1)+\frac{1}{4} \lambda  \rho  (\tau +1) (3 \rho  (\tau +1)+2 (\tau +2)),\notag\\
(u)_6 & =&\lambda ^6+\frac{5}{2} \lambda ^4 \rho  (\tau +1)+\frac{3}{4} \lambda ^2 \rho  (\tau +1)
   (2 \rho  (\tau +1)+\tau +2)\notag\\
   &&+\frac{1}{8} \rho  (\tau +1) \left(\rho ^2 (\tau +1)^2+2 \rho  (\tau +2) (\tau +1)+(\tau +2) (2 \tau +5)\right),\notag\\
(u)_7 & =& \lambda ^7+3 \lambda ^5 \rho  (\tau +1)+\frac{1}{2} \lambda ^3 \rho  (\tau +1) (5 \rho  (\tau +1)+2 (\tau +2))\notag\\
   &&+\frac{1}{4} \lambda  \rho  (\tau +1) \left(2 \rho ^2 (\tau +1)^2+3 \rho  (\tau +2) (\tau +1)+(\tau +2) (2 \tau +5)\right)  ,\notag\\
(u)_8 & =& \lambda ^8+\frac{7}{2} \lambda ^6 \rho  (\tau +1)+\frac{5}{4} \lambda ^4 \rho  (\tau +1)
   (3 \rho  (\tau +1)+\tau +2)\notag\\
   &&+\frac{1}{8} \lambda ^2 \rho  (\tau +1) \left(10 \rho ^2 (\tau +1)^2+12 \rho  (\tau +2) (\tau +1)+3 (\tau +2) (2 \tau +5)\right)\notag\\
   &&+\frac{1}{16} \rho  (\tau +1) \left(\rho ^3 (\tau +1)^3+3 \rho ^2 (\tau +2) (\tau +1)^2\right.\notag\\
   &&\left.+\rho  (\tau +2) (5 \tau +12) (\tau
   +1)+(\tau +2) \left(5 \tau ^2+27 \tau +37\right)\right)  .\notag
\end{eqnarray}

\noindent {\bf Moments of Case~2 analogous to Hermite}
\begin{eqnarray}
(u)_0 & = & 1,\quad (u)_1= \lambda,\quad (u)_2=\lambda ^2+\frac{\rho }{2},\quad
(u)_3 =\lambda ^3+\lambda  \rho,\quad
(u)_4=\lambda ^4+\frac{3 \lambda ^2 \rho }{2}+\frac{1}{4} \rho  (\rho +1),\notag\\
(u)_5  &=  &\lambda ^5+2 \lambda ^3 \rho +\frac{1}{4} \lambda  \rho  (3 \rho +2) ,\quad
(u)_6  =\lambda ^6+\frac{5 \lambda ^4 \rho }{2}+\frac{3}{4} \lambda ^2 \rho  (2 \rho
   +1)+\frac{1}{8} \rho  \left(\rho ^2+2 \rho +3\right),\notag\\
(u)_7  &=  & \lambda ^7+3 \lambda ^5 \rho +\frac{1}{2} \lambda ^3 \rho  (5 \rho +2)+\frac{1}{4}
   \lambda  \rho  \left(2 \rho ^2+3 \rho +3\right),\notag\\
(u)_8 &=  & \lambda ^8+\frac{7 \lambda ^6 \rho }{2}+\frac{5}{4} \lambda ^4 \rho  (3 \rho
   +1)+\frac{1}{8} \lambda ^2 \rho  \left(10 \rho ^2+12 \rho +9\right)+\frac{1}{16} \rho
    \left(\rho ^3+3 \rho ^2+7 \rho +15\right).\notag
\end{eqnarray}

\noindent {\bf Moments of Case~1 analogous to Laguerre}
\begin{eqnarray}
(u)_0 & = & 1,\quad (u)_1=\alpha +\lambda +2 \tau +1,\quad 
(u)_2= \rho  (\tau +1) (\alpha +\tau +1)+(\alpha +\lambda +2 \tau +1)^2, \notag\\
(u)_3& =& \rho  (\tau +1) (\alpha +\tau +1) (3 \alpha +2 \lambda +6 \tau +5)+(\alpha +\lambda +2
   \tau +1)^3\notag\\
(u)_4& =& \rho ^2 (\tau +1)^2 (\alpha +\tau +1)^2+\rho  (\tau +1) (\alpha +\tau +1) \notag\\
&&\left(4 \lambda  (2 \alpha +4 \tau +3)+6 \alpha ^2+(25 \alpha +44) \tau +22 \alpha +3 \lambda ^2+25 \tau ^2+22\right)\notag\\
   &&+(\alpha +\lambda +2 \tau +1)^4, \notag\\
(u)_5& =&\rho ^2 (\tau +1)^2 (\alpha +\tau +1)^2 (5 \alpha +3 \lambda
   +10 \tau +9)+\rho  (\tau +1) (\alpha +\tau +1)\notag\\
   && \left(3 \lambda ^2 (5 \alpha +10 \tau
   +7)+2 \lambda  \left(2 \left(5 \alpha ^2+16 \alpha +14\right)
   +(41 \alpha +64) \tau +41 \tau ^2\right)\right.\notag\\
   &&\left(65 \alpha ^2+253 \alpha +272\right) \tau +2 \left(5 \alpha ^3+30 \alpha ^2+68
   \alpha +2 \lambda ^3+55\right)\notag\\
   &&\left.+(135 \alpha +253) \tau ^2+90 \tau ^3\right)+(\alpha +\lambda +2 \tau +1)^5, \notag\\
(u)_6& =&\rho ^3 (\tau +1)^3 (\alpha +\tau +1)^3+\rho ^2 (\tau +1)^2 (\alpha +\tau +1)^2 \notag\\
&&\left(15 \alpha ^2+6 \lambda  (3 \alpha +6 \tau +5)+62 \alpha  \tau +58 \alpha +6 \lambda
   ^2+178 \tau +9\right)\notag\\
   &&+\rho  (\tau +1) (\alpha +\tau +1) \left(8 \lambda ^3 (3 \alpha +6 \tau +4)\right.\notag\\
   &&+3 \lambda ^2 \left(15 \alpha ^2+(61 \alpha +88) \tau +44 \alpha +61 \tau ^2+35\right)\notag\\
   &&+4 \lambda  \left(\left(63 \alpha ^2+211 \alpha +196\right) \tau +(129 \alpha
   +211) \tau ^2 \right.\notag\\
   &&\left.+86 \tau ^3 +10 \alpha ^3+51 \alpha ^2+98 \alpha +69\right)\notag\\
   &&+302 \tau ^4+ 2 (302 \alpha +607) \tau ^3+\left(437 \alpha ^2+1821 \alpha +2112\right) \tau ^2\notag\\
   &&+\left(135 \alpha ^3+867 \alpha ^2+2112 \alpha +1840\right) \tau \notag\\
   &&\left.+496\alpha ^2+
   +15 \alpha ^4+130 \alpha ^3+920 \alpha +5 \lambda ^4+6599\right)\notag\\
   &&+(\alpha +\lambda +2 \tau+1)^6,\notag\\
   (u)_7& =& \rho ^3 (\tau +1)^3 (\alpha +\tau +1)^3 (7 \alpha +4
   \lambda +14 \tau +13)+\rho ^2 (\tau +1)^2 (\alpha +\tau +1)^2\notag\\
   && \left(10 \lambda ^3+6 \lambda ^2 (7 \alpha +14 \tau +11)+3 \lambda  \left(21 \alpha ^2+86 \alpha  \tau +74 \alpha +86 \tau ^2+148\tau +69\right)\right.\notag\\
   &&\left.+308 \tau ^3+22 (21\alpha +41) \tau ^2+2 \left(112 \alpha ^2+451 \alpha +481\right) \tau \right.\notag\\
   &&\left.+35 \alpha ^3+217 \alpha ^2+481 \alpha +1371\right)\notag\\
   &&+\rho  (\tau +1) (\alpha +\tau +1)
   \left(5 \lambda ^4 (7 \alpha +14 \tau +9)\right.\notag\\
   &&\left.+4 \lambda ^3 \left(85 \tau ^2+(85 \alpha +116) \tau +21 \alpha ^2+58 \alpha +43\right)\right.\notag\\
   &&+3 \lambda ^2 \left(294 \tau ^3+(441 \alpha
   +659) \tau ^2+\left(217 \alpha ^2+659 \alpha +550\right) \tau \right.\notag\\
   &&\left.+35 \alpha ^3+161 \alpha ^2+275 \alpha +173\right)\notag\\
   &&+2 \lambda  \left(2 (646\alpha +1115) \tau ^3+\left(947 \alpha ^2+3345 \alpha +3318\right) \tau ^2\right.\notag\\
   &&+\left(301 \alpha ^3+1619 \alpha ^2+3318 \alpha +2508\right) \tau \notag\\
   &&\left.+ 35 \alpha ^4+252 \alpha ^3+794
   \alpha ^2+1254 \alpha +646 \tau ^4+797\right)+(\alpha +\lambda +2 \tau +1)^7.\notag
\end{eqnarray}

\noindent {\bf Moments of Case~2 analogous to Laguerre}
\begin{eqnarray}
(u)_0&=& 1,\quad (u)_1=\alpha +\lambda -1,\quad (u)_2=\rho+(\alpha +\lambda -1)^2,\notag\\
(u)_3& =&  \rho  (3 \alpha +2 \lambda -1)+(\alpha +\lambda -1)^3,\notag\\
(u)_4& =& \rho ^2+\rho  \left(3 \left(2 \alpha ^2-\alpha +1\right)+4 (2 \alpha -1) \lambda +3 \lambda ^2\right)+(\alpha +\lambda -1)^4 ,\notag\\
(u)_5& =&\rho ^2 (5 \alpha +3 \lambda -1)
+\rho  \left(4 \lambda ^3+3 (5 \alpha -3) \lambda ^2+2 \left(10 \alpha ^2-9 \alpha +5\right) \lambda\right.\notag\\
&&\left.+10 \alpha ^3 -5 \alpha ^2+18 \alpha +1\right)+(\alpha +\lambda -1)^5,\notag\\
(u)_6& =& \rho ^3+\rho ^2 \left(15 \alpha ^2+6 (3 \alpha -1) \lambda -4 \alpha +6 \lambda ^2+5\right)\notag\\
   &&+\rho  \left(5 \lambda ^4+8 (3 \alpha -2)\lambda ^3+3 \left(15 \alpha ^2-17 \alpha +8\right) \lambda ^2+8 \left(5 \alpha ^3-6 \alpha ^2+8 \alpha -1\right) \lambda\right.\notag\\
   &&\left. +66 \alpha
   ^2+15 \alpha ^4-5 \alpha ^3+25 \alpha +19\right)+(\alpha +\lambda -1)^6,\notag\\
(u)_7& =& \rho ^3 (7 \alpha +4 \lambda -1)
+\rho ^2 \left(10 \lambda ^3+6 (7 \alpha-3) \lambda ^2+3 \left(21 \alpha ^2-12 \alpha +7\right) \lambda \right.\notag\\
&&\left.+35 \alpha ^3-7 \alpha ^2+41 \alpha +3\right)+\rho 
   \left(6 \lambda ^5+5 (7 \alpha -5) \lambda ^4+12 \left(7 \alpha ^2-9 \alpha +4\right) \lambda ^3\right.\notag\\
   &&+3 \left(35 \alpha ^3-56 \alpha ^2+57\alpha -12\right) \lambda ^2+2 \left(35 \alpha ^4-49 \alpha ^3+122 \alpha ^2-11 \alpha +23\right)
   \lambda \notag\\
   &&\left.+3 \left(7 \alpha ^5+63 \alpha ^3+61 \alpha ^2+82 \alpha
   +27\right)\right)+(\alpha
   +\lambda -1)^7, \notag\\
(u)_8& =& \rho ^4+\rho ^3 \left(28 \alpha ^2+8 (4 \alpha -1) \lambda -5 \alpha +10 \lambda ^2+7\right)
+\rho ^2 \left(15 \lambda ^4+40 (2 \alpha -1) \lambda
   ^3\right.\notag\\
&&\left.+12 \left(14 \alpha ^2-11 \alpha +5\right) \lambda ^2+12
   \left(14 \alpha ^3-10 \alpha ^2+15 \alpha -1\right) \lambda \right.\notag\\
   &&\left.+70 \alpha ^4+195 \alpha ^2+66 \alpha +41\right)+\rho  \left(7 \lambda ^6+12 (4 \alpha -3) \lambda ^5 
   +5 \left(28 \alpha ^2-39 \alpha
   +17\right) \lambda ^4\right.\notag\\
   &&+32 \left(7 \alpha ^3-13 \alpha ^2+12 \alpha
   -3\right) \lambda ^3+15 \left(14 \alpha ^4-28 \alpha ^3+47 \alpha ^2-16 \alpha+7\right) \lambda ^2 \notag\\
   && \left. +4 \left(28 \alpha ^5-42 \alpha ^4+180 \alpha ^3+25 \alpha ^2+140
   \alpha +29\right) \lambda \right.\notag\\
   &&\left.+28 \alpha ^6+14 \alpha ^5+462 \alpha ^4+835 \alpha ^3+1715 \alpha ^2+1447 \alpha +539\right)+(\alpha
   +\lambda -1)^8 . \notag
\end{eqnarray}

\noindent {\bf Moments of Case~1 analogous to Bessel}
\begin{eqnarray}
(u)_0 & = & 1, \quad (u)_1= \frac{\lambda  (\alpha +\tau -1) (\alpha +\tau )-\alpha +1}{(\alpha +\tau -1) (\alpha +\tau )},\notag\\
(u)_2 &= & 
-\frac{\rho 
   (\tau +1) (2 \alpha +\tau -1)}{(\alpha +\tau )^2 (2 \alpha +2 \tau -1) (2 \alpha +2
   \tau +1)}+\frac{\left(\lambda  (\alpha +\tau -1) (\alpha +\tau )-\alpha +1\right)^2}{(\alpha +\tau -1)^2 (\alpha +\tau )^2} ,\notag\\
(u)_3 &= & 
-\Big(\rho
    (\tau +1) (2 \alpha +\tau -1) \left(2 \lambda  (\alpha +\tau -1) (\alpha +\tau ) (\alpha +\tau +1)\right.\notag\\
    &&\left.-(\alpha -1) (3 \alpha +3
   \tau +1)\right)\Big)\notag\\
   &&/\Big((\alpha +\tau -1) (\alpha
   +\tau )^3 (\alpha +\tau +1) (2 \alpha +2 \tau -1) (2 \alpha +2 \tau +1)\Big)\notag\\
&&   +\frac{\left(\lambda  (\alpha +\tau -1) (\alpha +\tau )-\alpha +1\right)^3}{(\alpha +\tau -1)^3 (\alpha +\tau )^3},\notag\\
(u)_4 & = & \frac{\rho^2 (\tau +1)^2 (2 \alpha +\tau -1)^2}{(\alpha +\tau )^4 (2 \alpha +2 \tau -1)^2 (2
   \alpha +2 \tau +1)^2}-\rho(\tau +1) (2 \alpha +\tau -1)\notag\\
    &&\left(3 \lambda ^2 (\alpha +\tau -1)^2 (\alpha +\tau )^2 (\alpha +\tau +1) (2 \alpha +2 \tau
   +1) (2 \alpha +2 \tau +3)\right.\notag\\
   && -4 \lambda(\alpha -1)   (\alpha +\tau -1) (\alpha +\tau ) (2
   \alpha +2 \tau +1)^2 (2 \alpha +2 \tau +3)\notag\\
&&  +\left(15 \alpha ^2-46 \alpha +26\right) \tau ^3+\left(65 \alpha ^3-107 \alpha ^2+4
   \alpha +36\right) \tau ^2\notag\\
   &&+2 (\alpha -1) \left(35 \alpha ^3-\alpha ^2-25 \alpha
   -11\right) \tau \notag\\
   &&\left.+2 (\alpha -1)^2 \left(12 \alpha ^3+18 \alpha ^2+11 \alpha
   +3\right)+(1-5 \alpha ) \tau ^4-\tau ^5\right)\notag\\
   &&/\left((\alpha +\tau -1)^2 (\alpha +\tau )^4 (\alpha +\tau
   +1) (2 \alpha +2 \tau -1) (2 \alpha +2 \tau +1)^2 (2 \alpha +2 \tau +3)\right)\notag\\
 &&  +\frac{\left(\alpha ^2 \lambda +2 \alpha  \lambda  \tau -\alpha  \lambda -\alpha +\lambda
    \tau ^2-\lambda  \tau +1\right)^4}{(\alpha +\tau -1)^4 (\alpha +\tau )^4}.\notag
\end{eqnarray}

\noindent {\bf Moments of Case~2 analogous to Bessel}
\begin{eqnarray}
(u)_0 & = & 1, \quad (u)_1=\lambda,\quad (u)_2=\lambda ^2+\rho, \quad
(u)_3 =\lambda ^3+2 \lambda  \rho -\frac{\rho }{\alpha -2},\notag\\
(u)_4& =&\lambda ^4+3 \lambda ^2 \rho
 -\frac{2 \lambda  \rho }{\alpha -2}+\frac{\rho  \left((\alpha -2) (2 \alpha -5) \rho +2\right)}{(\alpha -2) (2
   \alpha -5)},\notag\\
(u)_5& =&\lambda ^5+4 \lambda ^3 \rho
-\frac{3 \lambda ^2 \rho }{\alpha-2}
+\frac{\lambda  \rho  \left(3 (\alpha -2) (2 \alpha -5) \rho +4\right)}{(\alpha
   -2) (2 \alpha -5)}-\frac{2 \rho  \left((\alpha -3) (2 \alpha -5) \rho +1\right)}{(\alpha -3) (\alpha -2) (2 \alpha -5)},\notag\\
(u)_6& =&\lambda ^6+5\lambda ^4 \rho -\frac{4 \lambda ^3 \rho }{\alpha -2}
+\frac{6 \lambda ^2 \rho  \left((\alpha -2) (2 \alpha -5) \rho +1\right)}{(\alpha -2) (2 \alpha -5)}
-\frac{2 \lambda  \rho  \left(3 (\alpha -3) (2 \alpha -5) \rho +2\right)}{(\alpha -3) (\alpha -2) (2 \alpha
   -5)}\notag\\
   &&+\frac{\rho  \left((\alpha -3) (\alpha -2)^2 (2 \alpha -7) (2 \alpha -5) \rho ^2+(\alpha -3) (2 \alpha -7)
   (6 \alpha -13) \rho +4 (\alpha -2)\right)}{(\alpha -3) (\alpha
   -2)^2 (2 \alpha -7) (2 \alpha -5)}.\notag
\end{eqnarray}

\noindent {\bf Moments of Case~3 analogous to Bessel}
\begin{eqnarray}
(u)_0 & = & 1, \quad (u)_1=\lambda,\quad (u)_2=\lambda ^2+\rho,\quad
(u)_3 = \lambda ^3+2 \lambda  \rho -\frac{\rho }{\alpha },\notag\\
(u)_4& =& \lambda ^4+3 \lambda ^2 \rho-\frac{2 \lambda  \rho }{\alpha }+\frac{\rho  \left(\alpha  (2 \alpha +1) \rho +2\right)}{\alpha  (2 \alpha
   +1)} ,\notag\\
(u)_5& =& \lambda ^5+4 \lambda ^3\rho-\frac{3 \lambda ^2 \rho }{\alpha }
+\frac{\lambda  \rho  \left(3 \alpha  (2 \alpha +1) \rho +4\right)}{\alpha  (2 \alpha
   +1)}-\frac{2 \rho  \left((\alpha +1) (2 \alpha +1) \rho +1\right)}{\alpha 
   (\alpha +1) (2 \alpha +1)},\notag\\
(u)_6& =&  \lambda ^6+5\lambda ^4 \rho  -\frac{4 \lambda ^3 \rho }{\alpha }+ \frac{6 \lambda ^2 \rho  \left(\alpha  (2 \alpha +1) \rho +1\right)}{\alpha  (2
   \alpha +1)}-\frac{2 \lambda  \rho  \left(3 (\alpha +1) (2 \alpha +1) \rho +2\right)}{\alpha  (\alpha +1) (2 \alpha +1)}\notag\\
   &&+\frac{\rho  \left(\alpha ^2 (\alpha +1) (2 \alpha +1) (2 \alpha +3) \rho ^2+(\alpha +1) (2 \alpha +3) (6
   \alpha +1) \rho +4 \alpha \right)}{\alpha ^2 (\alpha
   +1) (2 \alpha +1) (2 \alpha +3)}.\notag
\end{eqnarray}

\noindent {\bf Moments of Case~4 analogous to Bessel}
\begin{eqnarray}
(u)_0 & = & 1, \quad (u)_1=\frac{1}{(\alpha -1) \alpha }+\lambda,\quad 
(u)_2=\frac{\rho }{\alpha ^2 (2 \alpha -1) (2 \alpha +1)}+\frac{\left((\alpha -1) \alpha  \lambda +1\right)^2}{(\alpha -1)^2 \alpha
   ^2},\notag\\
(u)_3& =& \frac{\rho  \left(2 (\alpha -1) (\alpha +1) \alpha  \lambda +3 \alpha +1\right)}{(\alpha
   -1) \alpha ^3 (\alpha +1) (2 \alpha -1) (2 \alpha +1)}
   +\frac{\left((\alpha -1) \alpha  \lambda +1\right)^3}{(\alpha -1)^3 \alpha ^3},\notag\\
(u)_4& =& \frac{\rho ^2}{\alpha ^4 (2 \alpha -1)^2 (2 \alpha +1)^2}
+\frac{\left((\alpha -1) \alpha  \lambda +1\right)^4}{(\alpha -1)^4 \alpha ^4}\notag\\
   &&+\rho  \left(3 (\alpha -1)^2 (\alpha +1) (2 \alpha +1) (2 \alpha +3) \alpha ^2 \lambda
   ^2\right.\notag\\
   &&\left.+4 (\alpha -1) (2 \alpha +1)^2 (2 \alpha +3) \alpha  \lambda +25 \alpha ^3+37 \alpha ^2+22
   \alpha +6\right)\notag\\
   &&/\left((\alpha -1)^2 \alpha ^4 (\alpha +1) (2 \alpha
   -1) (2 \alpha +1)^2 (2 \alpha +3)\right),\notag\\
(u)_5& =& \frac{\rho ^2 \left(3 \alpha ^3 \lambda -3 \alpha  \lambda +5 \alpha +1\right)}{(\alpha
   -1) \alpha ^5 (\alpha +1) (2 \alpha -1)^2 (2 \alpha +1)^2}
   +\frac{\left((\alpha -1) \alpha  \lambda +1\right)^5}{(\alpha -1)^5 \alpha ^5}\notag\\
   &&+\Big(\rho  \left(45 \alpha ^4+4 (\alpha -1)^3 (\alpha +1) (\alpha +2) (2 \alpha +1) (2 \alpha +3) \alpha
   ^3 \lambda ^3\right.\notag\\
   &&+107 \alpha ^3+3 (\alpha -1)^2 (\alpha +2) (2 \alpha +1) (2 \alpha +3)
   (5 \alpha +3) \alpha ^2 \lambda ^2\notag\\
   &&\left.+122 \alpha ^2+2 (\alpha -1) (\alpha +2) \left(41
   \alpha ^3+77 \alpha ^2+50 \alpha +12\right) \alpha  \lambda +74 \alpha +12\right)\Big)\notag\\
   &&/\Big((\alpha -1)^3 \alpha ^5
   (\alpha +1) (\alpha +2) (2 \alpha -1) (2 \alpha +1)^2 (2 \alpha +3)\Big).\notag
\end{eqnarray}

\noindent {\bf Moments of Case~1 analogous to Jacobi}
\begin{eqnarray}
(u)_0 & = & 1, \quad (u)_1= \frac{(\alpha -\beta ) (\alpha +\beta )}{(\alpha +\beta +2 \tau ) (\alpha +\beta +2 \tau
   +2)}+\lambda,\notag\\
   (u)_2& =& \frac{4 \rho  (\tau +1) (\alpha +\tau +1) (\beta +\tau
   +1) (\alpha +\beta +\tau +1)}{(\alpha +\beta +2 \tau +1) (\alpha +\beta +2 \tau +2)^2
   (\alpha +\beta +2 \tau +3)}\notag\\
   &&+\frac{\left(\lambda  (\alpha +\beta +2 \tau ) (\alpha +\beta +2 \tau +2)+(\alpha -\beta ) (\alpha
   +\beta ) \right)^2}{(\alpha +\beta +2 \tau )^2
   (\alpha +\beta +2 \tau +2)^2}, \notag\\
(u)_3& =& \frac{\left(\lambda  (\alpha +\beta +2 \tau ) (\alpha +\beta +2 \tau +2)+(\alpha -\beta ) (\alpha
   +\beta ) \right)^3}{(\alpha +\beta +2 \tau )^3
   (\alpha +\beta +2 \tau +2)^3}\notag\\
   &&+\Big(4 \rho  (\tau +1) (\alpha +\tau +1) (\beta +\tau+1) (\alpha +\beta +\tau +1)\notag\\
   && \left(2 \lambda  (\alpha +\beta +2 \tau ) (\alpha +\beta +2 \tau +2) (\alpha +\beta +2 \tau
   +4)\right.\notag\\
   &&\left.+(\alpha -\beta ) (\alpha +\beta ) (3 \alpha +3 \beta +6 \tau +8) \right)\Big)\notag\\
 &&  /\Big((\alpha +\beta +2 \tau ) (\alpha +\beta +2 \tau +1)
   (\alpha +\beta +2 \tau +2)^3 (\alpha +\beta +2 \tau +3) \notag\\
   &&(\alpha +\beta +2 \tau +4)\Big).\notag\\
(u)_4& =&   \frac{\left(\lambda  (\alpha +\beta +2 \tau ) (\alpha +\beta +2 \tau +2)+(\alpha -\beta ) (\alpha
   +\beta ) \right)^4}{(\alpha +\beta +2 \tau )^4
   (\alpha +\beta +2 \tau +2)^4}\notag\\
   &&+\frac{16 \rho ^2 (\tau +1)^2 (\alpha +\tau +1)^2 (\beta
   +\tau +1)^2 (\alpha +\beta +\tau +1)^2}{(\alpha +\beta +2 \tau +1)^2 (\alpha +\beta
   +2 \tau +2)^4 (\alpha +\beta +2 \tau +3)^2}\notag\\
   && +\Big(4 \rho  (\tau +1) (\alpha +\tau +1) (\beta +\tau +1) (\alpha +\beta +\tau +1)\notag\\
   &&\Big(3 \lambda ^2 (\alpha +\beta +2 \tau )^2 (\alpha +\beta +2 \tau +3) (\alpha +\beta +2
   \tau +4) (\alpha +\beta +2 \tau +5) \notag\\
   &&(\alpha +\beta +2 \tau +2)^2\notag\\
   &&+8 (\alpha -\beta )
   (\alpha +\beta ) \lambda  (\alpha +\beta +2 \tau ) (\alpha +\beta +2 \tau +3)^2
   (\alpha +\beta +2 \tau +5)\notag\\
   && (\alpha +\beta +2 \tau +2)\notag\\
   &&+2 \Big(4 \tau ^5 \left(19 \alpha ^2+42 \alpha  \beta +96 \alpha +19 \beta ^2+96 \beta
   +100\right)\notag\\
   &&+2 \tau ^4 \left(25 \alpha ^3+95 \alpha ^2 \beta +224 \alpha ^2+95 \alpha 
   \beta ^2+480 \alpha  \beta +500 \alpha +25 \beta ^3+224 \beta ^2\right.\notag\\
   &&\left.+500 \beta+304\right)\notag\\
   &&+4 \tau ^3 \left(10 \alpha ^4+25 \alpha ^3 \beta +64 \alpha ^3+30 \alpha
   ^2 \beta ^2+224 \alpha ^2 \beta +240 \alpha ^2+25 \alpha  \beta ^3\right.\notag\\
   &&\left.+224 \alpha  \beta
   ^2+500 \alpha  \beta +304 \alpha +10 \beta ^4+64 \beta ^3+240 \beta ^2+304 \beta
   +112\right)\notag\\
   &&+2 \tau ^2 \left(19 \alpha ^5+30 \alpha ^4 \beta +92 \alpha ^4-5 \alpha ^3
   \beta ^2+192 \alpha ^3 \beta +220 \alpha ^3-5 \alpha ^2 \beta ^3\right.\notag\\
   &&\left.+200 \alpha ^2 \beta
   ^2+720 \alpha ^2 \beta +448 \alpha ^2+30 \alpha  \beta ^4+192 \alpha  \beta ^3+720
   \alpha  \beta ^2\right.\notag\\
   &&\left.+912 \alpha  \beta +336 \alpha +19 \beta ^5+92 \beta ^4+220 \beta
   ^3+448 \beta ^2+336 \beta +64\right)\notag\\
   &&+2 \tau  (\alpha +\beta ) \left(9 \alpha ^5+10
   \alpha ^4 \beta +60 \alpha ^4-15 \alpha ^3 \beta ^2+32 \alpha ^3 \beta +133 \alpha
   ^3\right.\notag\\
   &&\left.-15 \alpha ^2 \beta ^3-56 \alpha ^2 \beta ^2+87 \alpha ^2 \beta +144 \alpha ^2+10
   \alpha  \beta ^4+32 \alpha  \beta ^3+87 \alpha  \beta ^2+304 \alpha  \beta \right.\notag\\
   &&\left.+168
   \alpha +9 \beta ^5+60 \beta ^4+133 \beta ^3+144 \beta ^2+168 \beta +64\right)\notag\\
   &&+(\alpha +\beta )^2 \left(3 \alpha ^5+3 \alpha ^4 \beta +28 \alpha ^4-6 \alpha ^3 \beta ^2+4
   \alpha ^3 \beta +93 \alpha ^3-6 \alpha ^2 \beta ^3-48 \alpha ^2 \beta ^2\right.\notag\\
   &&\left.-53 \alpha ^2
   \beta +124 \alpha ^2+3 \alpha  \beta ^4+4 \alpha  \beta ^3-53 \alpha  \beta ^2-104
   \alpha  \beta +56 \alpha +3 \beta ^5+28 \beta ^4\right.\notag\\
   &&\left.+93 \beta ^3+124 \beta ^2+56 \beta
   +32\right)+8 \tau ^6 (7 \alpha +7 \beta +16)+16 \tau ^7 \Big)\Big)\notag\\
  &&/ \Big((\alpha +\beta +2 \tau )^2 (\alpha +\beta +2 \tau +1)
   (\alpha +\beta +2 \tau +2)^4 (\alpha +\beta +2 \tau +3)^2 \notag\\
   &&(\alpha +\beta +2 \tau +4)
   (\alpha +\beta +2 \tau +5)\Big).\notag
 \end{eqnarray}
 
\noindent {\bf Moments of Case~2 analogous to Jacobi}
\begin{eqnarray}
(u)_0 & = & 1, \quad (u)_1=\lambda,\quad (u)_2=\lambda ^2+\rho, \quad
(u)_3=\lambda ^3+2 \lambda  \rho-\frac{\rho  (\alpha -\beta )}{\alpha +\beta +2} ,\notag\\
(u)_4& =&\lambda ^4+3 \lambda ^2 \rho-\frac{2 \lambda  \rho  (\alpha
   -\beta )}{\alpha +\beta +2}\notag\\
   && +\frac{\rho  \left(\alpha ^2+\rho  (\alpha +\beta +2) (\alpha +\beta +3)-2 \alpha  \beta +\alpha +\beta
   ^2+\beta +2\right)}{(\alpha +\beta +2) (\alpha +\beta +3)},\notag\\
(u)_5& =&\lambda ^5+4 \lambda ^3 \rho -\frac{3 \lambda
   ^2 \rho  (\alpha -\beta )}{\alpha +\beta +2}\notag\\
   &&+\frac{\lambda  \rho  \left(2 \left(\alpha ^2-2 \alpha  \beta +\alpha +\beta ^2+\beta +2\right)+3 \rho  (\alpha
   +\beta +2) (\alpha +\beta +3)\right)}{(\alpha +\beta +2) (\alpha +\beta +3)}\notag\\
   &&-\frac{\rho  (\alpha
   -\beta ) \left(2 \rho  (\alpha +\beta +3) (\alpha +\beta +4)+\alpha ^2-2 \alpha  \beta +3 \alpha +\beta
   ^2+3 \beta +8\right)}{(\alpha +\beta +2) (\alpha +\beta +3) (\alpha +\beta +4)},\notag\\
(u)_6& =& \lambda ^6+5
   \lambda ^4 \rho -\frac{4 \lambda ^3 \rho  (\alpha -\beta )}{\alpha +\beta +2}\notag\\
   &&
    +\frac{3 \lambda ^2 \rho  \left(2 \rho  (\alpha +\beta +2) (\alpha +\beta +3)+\alpha ^2-2 \alpha  \beta +\alpha +\beta
   ^2+\beta +22\right)}{(\alpha +\beta +2) (\alpha +\beta +3)}\notag\\
   &&-\frac{2 \lambda 
   \rho  (\alpha -\beta ) \left(3 \rho  (\alpha +\beta +3) (\alpha +\beta +4)+\alpha ^2-2 \alpha  \beta +3 \alpha +\beta
   ^2+3 \beta +8\right)}{(\alpha +\beta +2) (\alpha +\beta +3) (\alpha +\beta
   +4)}\notag\\
   &&+\Big(\rho  \left(\rho  (\alpha +\beta +4) (\alpha +\beta +5) \right.\notag\\
   &&\left(3 \alpha ^3-3 \alpha ^2 \beta +9
   \alpha ^2-3 \alpha  \beta ^2-10 \alpha  \beta +8 \alpha +3 \beta ^3+9 \beta ^2+8
   \beta +8\right)\notag\\
   &&+(\alpha +\beta +2) \left(\alpha ^4-4 \alpha ^3 \beta +6 \alpha ^3+6
   \alpha ^2 \beta ^2-6 \alpha ^2 \beta +23 \alpha ^2-4 \alpha  \beta ^3-6 \alpha  \beta
   ^2\right.\notag\\
   &&\left.-34 \alpha  \beta +18 \alpha +\beta ^4+6 \beta ^3+23 \beta ^2+18 \beta
   +24\right)\notag\\
   &&\left.+\rho ^2 (\alpha +\beta +2)^2 (\alpha +\beta +3) (\alpha +\beta +4) (\alpha
   +\beta +5)\right)\Big)\notag\\
   &&/\Big((\alpha +\beta +2)^2 (\alpha +\beta +3) (\alpha +\beta +4) (\alpha +\beta
   +5)\Big).\notag
\end{eqnarray}

\begin{remark}
The moments of any regular form can be computed from the corresponding recurrence coefficients, $\beta_n$ and $\gamma_{n+1}$, using a general constructive method and algorithm introduced in \cite{Rocha-1991} for $d$-orthogonal polynomials, that can be applied here for $d=1$.
%
\end{remark}

\section{Conclusion}\label{Section6}

The recurrences established in Theorem~3 provide, for each of the ten canonical Laguerre--Hahn families of class zero, an explicit and algorithmic determination of the moment sequence from the first moment $(u)_0$ and the coefficients of the functional equation, namely $\Phi(x)$, $\psi(x)$, and $B(x)$. Their explicit structure allows a straightforward algorithm with implementation in {\it Mathematica$^{\circledR}$}, generating moments of arbitrarily high order in a systematic way, as illustrated in Section~\ref{Section5}.

Beyond class zero, the explicit determination of moments naturally arises for Laguerre-Hahn forms of higher class $s \geq 1$. Whether the approach developed here can be extended to those cases remains an open and challenging problem.


\section*{Addendum}

In this paper, we present the results of a scientific work originally initiated by  Pascal Maroni and Zélia da Rocha. This project combined theoretical results with their implementation in {\it Mathematica$^{\circledR}$}. As is often the case in scientific research, the work was temporarily suspended with the intention of resuming it at a later stage. 

Pascal Maroni was a highly prolific researcher, and his involvement in other projects and collaborations prevented him from returning to this subject. Unfortunately, he passed away in January 2024. His friends, collaborators, and admirers therefore decided to honor his memory by completing his ongoing works and publishing them in order to disseminate his ideas within the scientific community, while including Pascal Maroni as one of the authors.

In this spirit, Zélia da Rocha invited  Mohamed Khalfallah to complete the theoretical part, thereby allowing this work to be finally brought to completion.

\section*{Declarations}

\noindent  {\bf Data Availability} No datasets were generated or analysed during the current study.

\noindent  {\bf Conflicts of Interest} The authors have no conflict of interest to declare.

\noindent  {\bf Competing interests} The authors declare no competing interests.

\noindent {\bf Funding:} The third author was partially supported by CMUP, a member of LASI, which is financed by national funds through FCT -- Funda\c c\~ao para a Ci\^encia e a Tecnologia, I.P., under the projects with reference 
UID/00144/2025.


\bibliographystyle{plain}
\bibliography{bibliography-3}

\end{document}